\theoremstyle{plain}
 \newtheorem{theorem}{Theorem}[section]
 \newtheorem{lemma}[theorem]{Lemma}
 \newtheorem{proposition}[theorem]{Proposition}
\theoremstyle{definition}
 \newtheorem{definition}[theorem]{Definition}
 \newtheorem{example}[theorem]{Example}
\theoremstyle{remark}
 \newtheorem{remark}[theorem]{Remark}
 \newtheorem*{ack}{Acknowledgment}
\numberwithin{equation}{section}
\def\Hess{\mathop{\mathrm{Hess}_{\mathbb{R}^{n-1}}}}
\def\div{{\mathop{\mathrm{div}}\nolimits}_M}
\begin{document}
\title[]{Riemannian starshape and capacitary problems}
\author{Kazuhiro Ishige, Paolo Salani and Asuka Takatsu}
\date{}
\vspace{-15pt}
\begin{abstract}
We prove the Riemannian version of a classical Euclidean result:
every level set of the capacitary potential of a starshaped ring is starshaped.
In the Riemannian setting, we restrict ourselves to starshaped rings in a warped product of an open interval and the unit sphere.
We also extend the result by replacing the Laplacian with the $q$-Laplacian.
\end{abstract}
\maketitle

\vspace{40pt}

\noindent Addresses:

\smallskip
\noindent 
K. I.: Graduate School of Mathematical Sciences, The University of Tokyo, 3-8-1 
Komaba, Meguro-ku, Tokyo 153-8914, Japan\\
\noindent 
E-mail: {\tt ishige@ms.u-tokyo.ac.jp}\\

\smallskip
\noindent 
P. S.: Dipartimento di Matematica e Informatica ``U. Dini", 
Universit\`a di Firenze, viale Morgagni 67/A, 50134 Firenze\\
\noindent 
E-mail: {\tt paolo.salani@unifi.it}\\

\smallskip
\noindent 
A. T.: Department of Mathematical Sciences, Tokyo Metropolitan University, 1-1
Minami-osawa, Hachioji-shi, Tokyo 192-0397, Japan\\
\noindent 
E-mail: {\tt asuka@tmu.ac.jp}\\
\newpage
\section{Introduction}
It is a classical subject in PDEs to determine how the shape of the domain influences the shape of solutions. 
Many papers deal with the question whether and how some relevant geometric properties of the domain (and of the boundary data) of Dirichlet elliptic  problems are inherited by the solution. 
For instance, a prototypal example of results in this direction is the following.
\begin{proposition}\label{theorem:1.1}
Let $\Omega_0$ and $\Omega_1$ be two bounded open sets in $\mathbb{R}^{n}$ with $n\geq 2$ such that $0\in\overline{\Omega_1}\subset\Omega_0$, 
and consider the capacitary potential $u$ of the ring  shaped condenser $\Omega_0\setminus\overline{\Omega_1}$, 
i.e.,  $u$ is the solution to
\begin{equation}\label{eq:1.1}
\Delta_{\mathbb{R}^{n}} u=0\quad\text{ in }\Omega_0\setminus\overline{\Omega_1},\qquad
u=0\quad \text{ on }\partial\Omega_0,\qquad
u=1\quad\text{ in }\overline{\Omega_1}.
\end{equation}
If $\Omega_0$ and $\overline\Omega_1$ are both starshaped about $0$, then all the superlevel sets of $u$ are starshaped about $0$ as well.
\end{proposition}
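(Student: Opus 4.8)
\emph{Proof proposal.} The plan is to prove the stronger statement that $u$ is nonincreasing along every ray emanating from the origin: granting this, and recalling $u(0)=1$, each superlevel set $\{u>s\}$ and $\{u\ge s\}$ intersects every such ray in an initial segment, hence is starshaped about $0$.

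For $\lambda\in(0,1)$ I would introduce the dilate $u_\lambda(x):=u(\lambda x)$. Since $\Omega_0$ is starshaped about $0$ one has $\lambda\overline{\Omega_0}\subseteq\overline{\Omega_0}$, so $u_\lambda$ is well defined on $\overline{\Omega_0}$, and from $\Delta_{\mathbb{R}^{n}}u_\lambda=\lambda^{2}\,(\Delta_{\mathbb{R}^{n}}u)(\lambda\,\cdot\,)$ the dilate $u_\lambda$ is harmonic wherever $u$ is. The heart of the matter is the comparison
\begin{equation*}
u_\lambda\ \ge\ u\qquad\text{on }\overline{\Omega_0}\text{, for every }\lambda\in(0,1);
\end{equation*}
indeed, if $0<t_1<t_2$ and $t_2x\in\overline{\Omega_0}$, then taking $\lambda=t_1/t_2$ gives $u(t_1x)\ge u(t_2x)$, which is precisely the asserted ray-monotonicity.

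To establish the comparison I would argue by contradiction using the maximum principle for $w:=u_\lambda-u\in C(\overline{\Omega_0})$. On $\overline{\Omega_1}$ one has $\lambda x\in\lambda\overline{\Omega_1}\subseteq\overline{\Omega_1}$ — here the starshapedness of $\overline{\Omega_1}$ enters — so $u_\lambda=1=u$ and $w=0$ there; on $\partial\Omega_0$, $u=0\le u_\lambda$, so $w\ge0$ there. Were $w$ to attain a negative minimum, it would therefore be attained at some $x^{\ast}$ in the ring $D:=\Omega_0\setminus\overline{\Omega_1}$. Now $\lambda x^{\ast}\in\Omega_0$: if $\lambda x^{\ast}\in\overline{\Omega_1}$ then $u_\lambda(x^{\ast})=1\ge u(x^{\ast})$, a contradiction, so $\lambda x^{\ast}\in D$. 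Let $U$ be the connected component of $x^{\ast}$ in the open set $\{x\in D:\lambda x\in D\}$; on $U$ both $u$ and $u_\lambda$ are harmonic, hence so is $w$, and since $w$ attains its minimum at the interior point $x^{\ast}$ it is constant, equal to $w(x^{\ast})<0$, on $U$. But every $y\in\partial U\subseteq\overline{D}$ satisfies $w(y)\ge0$: either $y\in\partial\Omega_0\cup\partial\Omega_1$, where this was already shown, or $y\in D$ with $\lambda y\notin D$, which forces $\lambda y\in\overline{\Omega_1}$ and hence $w(y)=1-u(y)\ge0$. By continuity $w(y)=w(x^{\ast})<0$, a contradiction. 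This proves the comparison.

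I expect the only genuine difficulty to lie in this last step, and precisely in the fact that $u_\lambda$ equals $1$ not merely on $\overline{\Omega_1}$ but on the enlarged region $\lambda^{-1}\overline{\Omega_1}$, which protrudes into the ring $D$; one therefore cannot treat $u_\lambda-u$ as a harmonic function on all of $D$, but must run the maximum principle on the open subset of $D$ where genuine harmonicity persists and control its boundary — and this is exactly where the two hypotheses (starshapedness of $\Omega_0$ and of $\overline{\Omega_1}$) are used, and nowhere else. A subordinate, purely technical point is regularity: for arbitrary bounded open sets I would either assume outright in this prototype that $u\in C(\overline{\Omega_0})$, or reduce to that case by approximating $\Omega_0$ from outside and $\overline{\Omega_1}$ from inside by smooth starshaped domains and passing to the limit, starshapedness being stable under such approximation. (A more classical alternative for smooth domains is to differentiate rather than dilate: $v:=x\cdot\nabla u$ is harmonic in $D$, and the constancy of $u$ on $\partial\Omega_0$ and on $\partial\Omega_1$ and the inequality $x\cdot\nu\ge0$ there imply that $v\le0$ on $\partial D$, hence on $D$, again yielding the ray-monotonicity of $u$.)
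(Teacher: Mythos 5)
Your argument is correct, and it is a genuinely different route from what the paper does. The paper never proves Proposition \ref{theorem:1.1} directly (it is quoted from the literature, \cite{St}, \cites{Kaw1,Kaw2,Kaw3,Marcus}); its own technique, visible in the proof of Theorem \ref{theorem:5.2} and specializing to the Euclidean case when the conformal factor is constant, is to form the quasi-starshaped envelope $u^\ast(x)=\sup\{u(tx)\mid t\geq 1,\ tx\in\overline{\Omega_0}\}$, show that $u^\ast$ is a \emph{viscosity subsolution} of the equation (the key point being that at an interior maximizing parameter $t_x$ one has $\langle x,\nabla u(t_xx)\rangle=0$, which kills the extra first-order terms produced by dilation), and then invoke the weak comparison principle to get $u^\ast\leq u$, hence $u=u^\ast$. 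Your proof instead dilates the solution itself, $u_\lambda(x)=u(\lambda x)$, and runs the strong maximum principle for the harmonic difference $w=u_\lambda-u$ on the component of $\{x\in D:\lambda x\in D\}$ containing a putative negative minimum; your treatment of the boundary of that component (the case $y\in D$, $\lambda y\in\overline{\Omega_1}$, where $w(y)=1-u(y)\geq 0$) is exactly the point where the naive ``compare on all of $D$'' argument would fail, and you handle it correctly. The two conclusions are equivalent ($u=u^\ast$ is precisely your ray-monotonicity), but the methods buy different things: yours is elementary and self-contained for the Euclidean Laplacian (and would still work for the Euclidean $q$-Laplacian, since $u_\lambda$ remains $q$-harmonic and a comparison principle is available), whereas the envelope/viscosity route is what survives in the warped-product Riemannian setting and with right-hand sides $f$, where the dilated function is no longer a solution and only the envelope, tested from above, retains a one-sided differential inequality. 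Your regularity caveat (continuity of $u$ on $\overline{\Omega_0}$ for arbitrary bounded open sets, or approximation by smooth starshaped domains) is the right thing to flag and is consistent with how the paper itself assumes classical solvability in its main theorems.
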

See~\cite{St}*{Theorem~1} for $n=3$, and then \cites{Kaw1, Kaw2, Kaw3, Marcus} and references therein. 
An analogous result holds for the Green function of a starshaped set (see \cite{Gergen}).

For the reader's convenience, let us recall the Euclidean notion of starshapedness, a simple, yet interesting and important geometric property.
A set $S\subset\mathbb{R}^{n}$ containing  the origin $0$ is said \emph{starshaped about $0$} 
(simply \emph{starshaped} from now on, when there is no possibility of confusion)
 if the whole segment joining any point in $S$ to $0$  is contained in~$S$, 
i.e.,  if $v\in S$ implies $tv\in S$ for every $t\in[0,1]$.
 Clearly, one can easily define starshapedness with respect to any point simply by translation (and Proposition \ref{theorem:1.1} holds the same when substituting $0$ with any point in $\Omega_1$), but we do not need this sophistication here.

Proposition \ref{theorem:1.1} has been extended in several ways, by generalizing the operator involved 
(see for instance \cites{A, BM, F, FG, GR, JKS, Kaw1, Kaw2, Kaw3, Pf, S})
 and also considering analogous problems in Carnot groups \cites{DGS,DG,FP}.
In this paper we establish a similar result to Proposition \ref{theorem:1.1} in a Riemannian manifold, once a natural notion
of starshapedness has been introduced. Furthermore, we generalize our result to the case of $q$-Laplacian.

To this aim, we give the definition of \emph{starshaped neighborhood}.
Roughly speaking, starshaped neighborhoods of a point  are the images through the exponential map of starshaped sets in the tangent space  at the point
(see Definition~\ref{theorem:2.1} for details). 
We prove an interesting characterization of starshaped neighborhood, similar to the Euclidean case
(see Proposition \ref{theorem:2.6}).

The main results of this paper are stated in Section~\ref{section:5},
where we consider the generalization of problem \eqref{eq:1.1} to the $q$-Laplacian (and then some further generalizations) in 
a warped product of an open interval and the unit sphere (for a similar profitable use of this warped product,  see \cite{IST}).
We find sufficient conditions such that, 
if $\Omega_0$ and $\Omega_1$ are starshaped neighborhoods of the same point $o\in\Omega_1$ and $\overline{\Omega_1}\subset\Omega_0$
(in such a case, we say that $\Omega_0\setminus\overline{\Omega_1}$ is a \emph{starshaped ring} about $o$), 
then all the superlevel sets of the solution to the analogous problem to~\eqref{eq:1.1} are  starshaped neighborhoods of $o$ as well.
The idea of the proof is to define a \emph{quasi-starshaped envelope} of the solution and to prove that in fact it coincides with the solution itself via the viscosity comparison principle.
The rest of this paper is organized as follows.  
In Section~\ref{section:2}, we review starshaped neighborhoods and some of their relevant properties in Riemannian manifold. 
In Section~\ref{section:3}, we give some preliminary facts about a warped product of an open interval and the unit sphere.
In Section~\ref{section:4}, we recall briefly the notion of viscosity solutions. 
Finally, in Section~\ref{section:5}, we state and prove our main results.
\section{starshaped neighborhoods}\label{section:2}
Throughout this paper, let 
$n\in \mathbb{N}$ with $n\geq 2$ and
$(M, g)$ be an $n$-dimensional smooth, complete, connected Riemannian manifold.
We denote by  $\nabla_M$, $\div$, ${\mathop{\mathrm{Hess}}\nolimits}_M$ and $\Delta_M$ 
the gradient, the divergence, the Hessian and the Laplacian on $M$, respectively.
For $q\in \mathbb{R}$ with $q\geq 2$, we define the $q$-Laplacian of a function $f$ on $M$ by 
\[
\Delta_{q,M}f\coloneqq{\mathop{\mathrm{div}}\nolimits}_M ( |\nabla_M f|_g^{q-2} \nabla_M f  )
\quad \text{on }\{p\in M \mid  \nabla_M f(p) \neq 0\}.
\]
We then have
\[
\Delta_{q,M}f=|\nabla_M f|_g^{q-4} \left[(q-2) {\mathop{\mathrm{Hess}}\nolimits}_M f(\nabla_M f, \nabla_M f)+ |\nabla_M f|_g^{2} \Delta_M f \right] .
\]

For $o\in M$ and $R>0$, we define
\[ 
B_o(R)\coloneqq \{p\in M \mid d_M(o,p)<R\},
\]
where  $d_M$ is the Riemannian distance function on $M$. 
We also set $B_o(\infty)\coloneqq M$.

For a tangent vector $v$ to~$M$, set $|v|_g\coloneqq g(v,v)^{1/2}$.
We denote by $\langle\cdot, \cdot \rangle$ and~$|\cdot|$ the Euclidean inner product  
and the Euclidean norm, respectively.
With the customary abuse of notation, the same symbol $0$ is used for the origin in any vector space.
\begin{definition}\label{theorem:2.1}
Let $\Omega$ be an open neighborhood  of $o$ in $M$.
We say that $\Omega$ is a {\it normal} neighborhood
if there exists an open neighborhood $S$ of $0$ in $T_oM$ such that  
$S$ is diffeomorphic to $\Omega$ under the exponential map $\exp_o$ at $o$.
Moreover, if $S$ is starshaped about~$0$,
then $\Omega$ is called a {\it starshaped neighborhood of $o$}.
\end{definition}
Note that a starshaped neighborhood is often called a normal neighborhood (see for instance O'Neill~\cite{Oni}*{Chapter~3:~The Exponential Map}).
However, here we use the expression starshaped neighborhood to emphasize the analogy with the Euclidean setting.
\begin{definition}
Let $\Omega$ be a normal neighborhood  of $o$ in $M$ and $S$ an open neighborhood of $0$ in $T_oM$ such that  
$S$ is diffeomorphic to $\Omega$ under $\exp_o$.
\begin{itemize}
\setlength{\leftskip}{-15pt}
\item
We denote by  $\log_o\colon   \Omega \to S$ the inverse map of the restriction of $\exp_o$ to $S$. 
For $p\in\Omega$,  define a curve $\gamma_p:\mathbb{R} \to M$ by
\[
\gamma_p(t)\coloneqq \exp_o( t\log_o (p)).
\]
\item
We say that $\Omega$ is \emph{regular} 
if $\exp_o$ is injective  on $\overline{S}$ and  the interior of $\overline{S}$ coincides with~$S$. 
\end{itemize}
\end{definition}%
Let $\Omega$ be a starshaped  neighborhood of $o$, then  $\gamma_p([0,1])\subset\Omega$ clearly holds for every $p\in\Omega$.
In addition, for each $p\in \Omega\setminus\{o\}$,
there exists a unique $T_{\Omega,p}\in (1,\infty]$ such that   $\gamma_p(t)\in\Omega$ for $t\in [0,T_{\Omega,p})$
together with either $T_{\Omega,p}=\infty$ or $\gamma_p(T_{\Omega,p})\in \partial \Omega$.
Precisely, it is
\[
T_{\Omega,p}=\sup\{t\geq 1 \mid \gamma_p(t)\in \Omega\}.
\]
For $p\in\partial\Omega$, we can coherently set  $T_{\Omega,p}=1$, 
and then, if $\Omega$ is bounded, we have
\[
\Omega=\bigcup_{p\in \partial \Omega}\gamma_p([0,1)), \quad
\overline\Omega=\bigcup_{p\in \partial \Omega}\gamma_p([0,1]).
\]
Moreover, for $p,p'\in \Omega\setminus\{o\}$ and $t>0$ with $t\leq T_{\Omega,p}$ and $t^{-1}\leq T_{\Omega,p'}$,
$p'=\gamma_p(t)$ holds  if and only if $p=\gamma_{p'}(t^{-1})$ holds.

Let us make some examples to illustrate properties of  starshaped neighborhoods.
\begin{example}
Let us consider $M\coloneqq (\mathbb{R}/\mathbb{Z})\times \mathbb{R}$ and fix $o\in M$.  Then $T_o M=\mathbb{R}\times \mathbb{R}$.
\begin{itemize}
\setlength{\leftskip}{-15pt}
\item
We first  give an example of a non-regular starshaped neighborhood.
Define 
\[
S\coloneqq \left\{  (v_1,v_2) \in T_0M \biggm|  v_1^2+v_2^2<\left(\frac12\right)^2\right\}.
\]
Then $\Omega=\exp_o(S)$ is a starshaped neighborhood of $o$
but this is not regular
since $\exp_o$ is not  injective on $\partial S$.
Here $\overline{\Omega}$ is not a manifold with boundary.
\item
Although the closure of  a regular starshaped neighborhood with nonempty smooth boundary  
is  a manifold with boundary, 
there exists a starshaped neighborhood $\Omega$ such that $\overline{\Omega}$ is a manifold with boundary 
but $\Omega$ is not regular.
Indeed, define 
\begin{align*}
S_+&\coloneqq  \left\{ (v_1,v_2)\biggm|  v_1\in\left(\frac14,\frac12\right),\ v_2< \frac{-1}{(v_1-\frac14)(v_1-\frac34)}\right\},\\
S_-&\coloneqq  \left\{ (v_1,v_2)\biggm|  v_1\in\left(-\frac12,-\frac14\right),\ v_2< \frac{-1}{(v_1+\frac14)(v_{1}+\frac34)}\right\},\\
S&\coloneqq \left(\left[-\frac14,\frac14\right]\times \mathbb{R}\right) \cup S_+\cup S_-.
\end{align*}
Then $\Omega=\exp_o(S)$ is a starshaped neighborhood of $o$ and  $\overline{\Omega}$ is  a manifold with boundary.
However  $\exp_o$ is not  injective on $\partial S$ hence $\Omega$ is  not regular.
\item
In Euclidean space, the  union of any two starshaped sets is again a starshaped set.
However, the union of two starshaped neighborhoods is not necessarily a starshaped neighborhood.
Indeed, define 
\begin{align*}
S_0\coloneqq \left\{ (v_1,v_2) \biggm|  v_1^2+v_2^2<\frac{4}{25} \right\}, \quad
S_1\coloneqq \left\{  (v_1,v_2) \biggm|  \left(v_1-\frac13\right)^2+v_2^2<\frac{4}{25}\right\}.
\end{align*}
Then $\Omega_i=\exp_o(S_i)$
is a  regular starshaped neighborhood of $o$ for $i=0,1$ but  $\Omega_0 \cup \Omega_1$ is not a starshaped neighborhood of $o$.
\end{itemize}
\end{example}
Next, for a regular normal neighborhood $\Omega$ of $o$ in $M$,
we consider the relation between  the outward normal unit vector  to $\partial \Omega$ and that to~$\partial \log_o(\Omega)$.
\begin{definition}\label{theorem:2.4}
Let $\Omega$ be a  regular normal neighborhood  of $o$ in $M$ and $S\coloneqq \log_o(\Omega)$.
Assume that the boundary $\partial \Omega$ is nonempty  and smooth.
\begin{itemize}
\setlength{\leftskip}{-15pt}
\item
Let  $p\in \partial \Omega$ and $\nu \in T_pM$.
We say that $\nu$ is \emph{outward} to $\Omega$ at $p$ if
\[
\exp_p(-\varepsilon \nu) \in \Omega\text{ and }
\exp_p(\varepsilon \nu) \notin \Omega\quad \text{ for $\varepsilon>0$ small enough}.
\]
We say that $\nu$ is \emph{normal}  to $\partial \Omega$ at $p$ if 
\[
g(\nu, w)=0\quad\text{ for any $w\in T_p (\partial \Omega)$}.
\]
\item
Let  $v\in \partial S$ and $\nu_o \in T_oM$.
We say that $\nu_o$ is \emph{outward} to $S$ at $v$ if
\[
v- \varepsilon \nu_o \in S
\text{ and }
v+ \varepsilon \nu_o \notin S\quad\text{ for $\varepsilon>0$ small enough}.
\]
We say that $\nu_o$ is \emph{normal} to $\partial S$ at $v$ if 
\[
g(\nu_o, w)=0\quad \text{ for any $w$ tangent to $\partial S$ at $v$ in $T_oM$}.
\]
\end{itemize}
For $p \in \partial \Omega$, 
there exist a unique outward normal unit vector  to $\partial \Omega$ at $p$ and a unique outward normal vector to
$\partial S$ at $\log_o (p)$.
We denote such vectors by $\nu(p)$ and $\nu_o(\log_o (p))$, respectively.
\end{definition}
\begin{lemma}\label{theorem:2.5}
Let  $\Omega$ be a regular normal neighborhood of $o$ in $M$ such that $\partial \Omega$ is nonempty and smooth.
For $p\in \partial \Omega$,
$\nu(p)$ is parallel to
\begin{equation}\label{eq:2.1}
\frac{d}{d t}\bigg|_{t=0}  \exp_o \left( \log_o(p)+t \nu_o(\log_o(p))  \right)
\in T_p M
\end{equation}
with the same direction.
\end{lemma}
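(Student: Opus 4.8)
The plan is to transport the whole picture through $\exp_o$. Write $v:=\log_o(p)\in\partial S$ and observe first that the vector displayed in~\eqref{eq:2.1} is exactly $X:=d(\exp_o)_v\big(\nu_o(v)\big)\in T_pM$, since the curve $t\mapsto\log_o(p)+t\,\nu_o(\log_o(p))$ in $T_oM$ has velocity $\nu_o(v)$ at $t=0$. Thus the lemma reduces to two assertions: $X$ is $g_p$-orthogonal to $T_p(\partial\Omega)$, and $X$ points out of $\Omega$ at $p$.

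The first step is to turn regularity into a local diffeomorphism across the boundary. Fix a defining function $\phi$ for $\partial\Omega$ near $p$, with $\phi<0$ on $\Omega$, $\phi>0$ off $\overline\Omega$, and $\nabla_M\phi(p)\ne0$. Using that $\exp_o$ is injective on $\overline S$, whose interior is $S$, together with smoothness of $\partial\Omega$, the map $\exp_o$ restricts to a diffeomorphism from a neighborhood $U$ of $v$ in $T_oM$ onto a neighborhood of $p$ in $M$ carrying $U\cap\overline S$ onto a neighborhood of $p$ in $\overline\Omega$ and $U\cap\partial S$ onto a neighborhood of $p$ in $\partial\Omega$. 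In particular $\psi:=\phi\circ\exp_o$ is a defining function for $\partial S$ near $v$, the differential $d(\exp_o)_v\colon T_oM\to T_pM$ is a linear isomorphism, and
\[
T_p(\partial\Omega)=d(\exp_o)_v\big(T_v(\partial S)\big).
\]

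Next, equip $U$ with the pulled-back metric $h:=\exp_o^{*}g$ — equivalently, identify $U$ with its $\exp_o$-image and carry $g$ over — so that $d(\exp_o)_v\colon(T_oM,h_v)\to(T_pM,g_p)$ is a linear isometry. Under this identification, the defining property of $\nu_o(v)$ in Definition~\ref{theorem:2.4}, that $g(\nu_o(v),w)=0$ for every $w$ tangent to $\partial S$ at $v$, is exactly $h_v$-orthogonality of $\nu_o(v)$ to $T_v(\partial S)$. Hence, for all $w\in T_v(\partial S)$,
\[
g_p\big(X,\,d(\exp_o)_v(w)\big)=h_v\big(\nu_o(v),w\big)=0,
\]
so by the previous display $X$ is $g_p$-orthogonal to $T_p(\partial\Omega)$, i.e. $X=c\,\nu(p)$ for some $c\ne0$. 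To fix the sign, put $\sigma(t):=\exp_o\big(v+t\,\nu_o(v)\big)$, so $\sigma(0)=p$ and $\dot\sigma(0)=X$. Since $\nu_o(v)$ is outward to $S$ at $v$ and, being a nonzero normal vector, transverse to $\partial S$, we have $v-\varepsilon\nu_o(v)\in S$ and $v+\varepsilon\nu_o(v)\notin\overline S$ for all small $\varepsilon>0$; pushing these through the diffeomorphism on $U$ gives $\sigma(-\varepsilon)\in\Omega$ and $\sigma(\varepsilon)\notin\overline\Omega$. As $X$ is transverse to $\partial\Omega$, the sign of $\phi\circ\sigma$ near $0$ agrees to first order with that of $t\mapsto\phi\big(\exp_p(tX)\big)$, so $\exp_p(-\varepsilon X)\in\Omega$ and $\exp_p(\varepsilon X)\notin\Omega$ for small $\varepsilon>0$; that is, $X$ is outward to $\Omega$ at $p$. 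Since $\nu(p)$ is the outward normal and $X=c\,\nu(p)$, this forces $c>0$, which is the assertion.

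The step I expect to be the real obstacle is the first one: extracting from the bare definition of a regular normal neighborhood, together with smoothness of $\partial\Omega$, that $\exp_o$ is an honest local diffeomorphism across $\partial S$ — hence that $\partial S$ is smooth and that boundaries and their tangent spaces correspond under $d(\exp_o)_v$. Once that is secured, the coincidence of the normal \emph{lines} is immediate from $d(\exp_o)_v$ being a metric isometry, and only the transversality and sign bookkeeping for the \emph{orientation} remains, which is routine.
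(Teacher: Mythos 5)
Your reduction of the vector in \eqref{eq:2.1} to $X=d(\exp_o)_v(\nu_o(v))$ and the final orientation bookkeeping are fine, and the boundary correspondence you flagged as the expected obstacle is not the real issue. The genuine gap is the sentence where you declare that the normality of $\nu_o(v)$ in Definition~\ref{theorem:2.4} ``is exactly $h_v$-orthogonality'' for the pulled-back metric $h=\exp_o^{*}g$. In the paper, $g(\nu_o(v),w)$ for two vectors of $T_oM$ is the inner product $g_o=g|_o$ of the vector space $T_oM$, not $(\exp_o^{*}g)_v$; this is how $\nu_o$ is used immediately afterwards in the proof of Proposition~\ref{theorem:2.6}, where a $g_o$-orthonormal basis of $T_oM$ is fixed, $\nu_o$ is computed by the Euclidean formula $(\nabla_{\mathbb{R}^{n-1}}f,-1)/|(\nabla_{\mathbb{R}^{n-1}}f,-1)|$, and the Euclidean starshapedness criterion of \cite{DF} is applied to $S\subset(T_oM,g_o)$. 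With that reading, your chain $g_p\bigl(X,d(\exp_o)_v(w)\bigr)=h_v(\nu_o(v),w)=0$ breaks at the second equality: $\nu_o(v)$ is $g_o$-orthogonal, not $h_v$-orthogonal, to $T_v(\partial S)$, and the two notions genuinely differ, because $d(\exp_o)_v$ is an isometry for $h_v$ but not for $g_o$. The Gauss lemma gives $h_v(v,w)=g_o(v,w)$ for every $w$, i.e.\ agreement only against the radial direction; on the $g_o$-orthogonal complement of $v$ the differential rescales lengths (by $\sin|v|/|v|$ on the round sphere, for instance), so a hyperplane $T_v(\partial S)$ whose $g_o$-normal has both a radial and an angular component has different $g_o$- and $h_v$-normal directions. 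Your argument therefore proves the (essentially tautological) variant of the lemma in which $\nu_o$ is the $h_v$-normal, and it silently skips the only nontrivial content of the statement: comparing the normal taken in the inner product space $(T_oM,g_o)$ with the normal taken in $(T_pM,g_p)$.

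This is also not the paper's route. The paper makes no metric identification at all: it fixes normal coordinates $\xi$ at $o$ and a boundary chart $\eta$ of $\overline\Omega$ at $p$, represents $\nu(p)$ and $\nu_o(\log_o(p))$ through the outward coordinate direction $-\partial/\partial\eta^{n}|_p$ and its expression $-\sum_i(\partial\xi^i/\partial\eta^{n})(p)\,e_i$ in $T_oM$, and its computational content is the chain-rule identity $-\partial/\partial\eta^{n}|_p=d\exp_o(\dot c(0))$ with $\dot c(0)=-\sum_i(\partial\xi^i/\partial\eta^{n})(p)\,e_i$. To salvage your approach you would have to prove, not assume, that $d(\exp_o)_v$ carries the $g_o$-normal direction of $\partial S$ at $v$ to the $g_p$-normal direction of $\partial\Omega$ at $p$ --- but that is precisely the assertion of the lemma, so as written your proposal is circular at its key step.
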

\begin{proof}
Take  $p\in \partial \Omega$.
Let  $\xi=(\xi^1, \ldots \xi^{n})$ be the  normal coordinate functions on $\Omega$ at $o$ in~$M$
determined by an orthonormal basis $e_1, \ldots, e_{n}$  of $T_oM$, that is,
\[
\log_o(p')=\sum_{i=1}^{n} \xi^i (p')e_i \quad\text{ for }p'\in \Omega.
\]
Since $\overline{\Omega}$ is a manifold with boundary,
there exists a diffeomorphism $\eta=(\eta^1, \ldots \eta^{n})$ from an open neighborhood $O$ of $p$ in $\overline{\Omega}$ to an open set in $\mathbb{R}^{n-1}\times [0,\infty)$ such that 
\[
\eta^{n}(\Omega \cap O)>0, \qquad \eta^{n}(\partial \Omega\cap O)=0, \qquad \eta(p)=0.
\]
Then  $\nu(p)$ is parallel to
\[
-\frac{\partial}{\partial \eta^{n}}\bigg|_p 
= -\sum_{i=1}^{n}\frac{\partial \xi^i}{\partial \eta^{n}}(p)\frac{\partial }{\partial \xi^i}\bigg|_p
\in T_pM
\]
with the same direction.
On the other hand, since  $y\in \eta( \partial \Omega \cap O)$ satisfies 
\[
\log_o( \eta^{-1}(y) )=\sum_{i=1}^{n} \xi^{i}(\eta^{-1}(y)) e_i \in \partial S \cap \log_o(O), 
\]
$\nu_o(\log_o(p))$ is parallel to 
\[
-\sum_{i=1}^{n} \frac{\partial \xi^i}{\partial \eta^{n}}(p)  e_i  \in T_oM
\]
with the same direction.
Define curves in $T_oM$ by 
\[
c^i(t)\coloneqq  \log_o(p)+t e_i\quad\text{ for }i=1,\ldots, n,
\qquad
c(t)\coloneqq  \log_o(p)-t \sum_{i=1}^{n} \frac{\partial \xi^i}{\partial \eta^{n}}(p) e_i ,
\quad
\text{for }t\in \mathbb{R}.
\]
Then the tangent vector defined in \eqref{eq:2.1}  is parallel to $d\exp_o(\dot{c}(0))$
with the same direction. 
For a smooth function $f$ defined around~$p$, we see that 
\begin{align*}
  -\frac{\partial}{\partial \eta^{n}}\bigg|_p  f
&= -\sum_{i=1}^{n}\frac{\partial \xi^i}{\partial \eta^{n}}(p)
\frac{d}{dt}\bigg|_{t=0}  \left( f \circ \exp_o \right)( c^i(t))\\
&
= -\sum_{i=1}^{n}\frac{\partial \xi^i}{\partial \eta^{n}}(p)
df \left(d\exp_o \left(  \dot{c}^i(0)  \right)\right)\\
&
=
df\left(d\exp_o\left(-\sum_{i=1}^{n}\frac{\partial \xi^i}{\partial \eta^{n}}(p)   \dot{c}^i(0)  \right)\right)\\
&=df \left(d\exp_o\left( \dot{c}(0)  \right)\right)
=\left(d\exp_o\left( \dot{c}(0)  \right)\right)f.
\end{align*}
This completes the proof of the lemma.
\end{proof}
Now we give a characterization of  a regular starshaped neighborhood  in terms of the normal vector.
\begin{proposition}\label{theorem:2.6}
Let $\Omega$ be a regular normal neighborhood  of $o$ in $M$ such that $\partial \Omega$ is nonempty and smooth.
Then $\Omega$ is  a  starshaped neighborhood of $o$ if and only if 
\[
g(\nu(p), \dot{\gamma}_p(1))\geq 0
\]
holds for $p\in \partial \Omega$.
\end{proposition}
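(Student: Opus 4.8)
The plan is to translate the geometric starshapedness condition into the tangent space via the exponential map and then characterize it through the outward normal. Recall that $\Omega$ is a starshaped neighborhood of $o$ precisely when $S \coloneqq \log_o(\Omega)$ is starshaped about $0$ in $T_oM$, i.e.\ $v \in S$ implies $tv \in S$ for all $t \in [0,1]$. Since $\partial\Omega$ is smooth and $\Omega$ is regular, $\partial S = \log_o(\partial\Omega)$ is a smooth hypersurface in $T_oM$, and for each $p \in \partial\Omega$ the curve $\gamma_p$ corresponds under $\log_o$ to the ray $t \mapsto t\log_o(p)$. So the first step is to reduce the claim to a purely Euclidean statement about the starshaped set $S \subset T_oM \cong \mathbb{R}^n$: a smoothly bounded open set $S \ni 0$ is starshaped about $0$ if and only if $\langle \nu_o(v), v\rangle \geq 0$ for every $v \in \partial S$, where $\nu_o(v)$ is the outward normal to $\partial S$ at $v$. (One must be slightly careful here because $S$ need not be bounded, but the argument below is local at boundary points.)

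Next I would prove this Euclidean characterization. For the ``only if'' direction: if $S$ is starshaped about $0$ and $v \in \partial S$, then the segment $\{tv : t \in [0,1)\}$ lies in $S$ while $v \in \partial S$; hence moving from $v$ in the direction $-v$ enters $S$, so $-v$ points inward, i.e.\ $\langle \nu_o(v), v \rangle \geq 0$ by definition of outward (Definition~\ref{theorem:2.4}, noting $v - \varepsilon v = (1-\varepsilon)v \in S$). For the ``if'' direction: suppose $\langle \nu_o(v), v\rangle \geq 0$ for all $v \in \partial S$ but $S$ is not starshaped; then there is $w \in S$ and $t_0 \in (0,1)$ with $t_0 w \notin S$. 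Since $0 \in S$ and $w \in S$ with $S$ open, consider the set of $t \in [0,1]$ with $tw \in \overline{S}$; one finds a boundary point $v = t_* w \in \partial S$ with $t_* \in (0,1)$ such that the ray exits $S$ at $v$, meaning $v + \varepsilon v = (1+\varepsilon t_*/t_*)\cdot$... more carefully, that $(t_* + \varepsilon)w \notin S$ for small $\varepsilon > 0$, so $v$ points outward along $+v$ direction, forcing $\langle \nu_o(v), v \rangle < 0$, a contradiction. (Some care with connected components of the complement of $\{t : tw \in S\}$ in $[0,1]$ is needed to pin down such a $v$, but this is standard.)

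Finally I would invoke Lemma~\ref{theorem:2.5} to transfer the statement back to $M$. By that lemma, $\nu(p)$ and $\dfrac{d}{dt}\big|_{t=0}\exp_o(\log_o(p) + t\,\nu_o(\log_o(p))) = d\exp_o|_{\log_o(p)}(\nu_o(\log_o(p)))$ are positive multiples of each other; and by definition $\dot\gamma_p(1) = d\exp_o|_{\log_o(p)}(\log_o(p))$. Since $d\exp_o|_{\log_o(p)}$ is a linear isomorphism (regularity of $\Omega$ ensures $\exp_o$ is a diffeomorphism on $S$), and since the metric $g$ at $p$ pulls back under this isomorphism to \emph{some} inner product on $T_oM$ — but here I need the Gauss lemma: $d\exp_o|_{v}$ maps the radial direction $v$ to a vector $g$-orthogonal to the images of vectors $g$-orthogonal to $v$, and preserves the radial length. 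The sign of $g(\nu(p), \dot\gamma_p(1))$ therefore equals the sign of $g\big(d\exp_o(\nu_o(v)),\, d\exp_o(v)\big)$ where $v = \log_o(p)$, and decomposing $\nu_o(v)$ into its radial part (a multiple of $v$) and its $g$-orthogonal complement, the Gauss lemma shows this sign coincides with the sign of $\langle \nu_o(v), v\rangle$. Combining with the Euclidean characterization of the previous paragraph completes the proof.

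The main obstacle I anticipate is the last step: one must be careful that ``outward normal'' in $T_oM$ is defined with respect to the metric $g = g_o$ (the fixed inner product on $T_oM$), whereas the relevant computation on $M$ involves the pulled-back metric $(\exp_o)^*g$, which differs from $g_o$ away from $0$. The Gauss lemma is exactly what reconciles these: it guarantees that the radial and orthogonal splittings agree for the two metrics in the relevant sense, so that $\langle \nu_o(v), v\rangle_{g_o} \geq 0 \iff g(d\exp_o(\nu_o(v)), d\exp_o(v)) \geq 0$. Making this reconciliation precise — rather than hand-waving it — is the heart of the argument. A secondary, more technical point is handling the possible non-boundedness of $S$ in the ``if'' direction of the Euclidean lemma, which requires localizing the exit-point argument to a bounded region.
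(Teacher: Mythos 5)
Your overall reduction is the same as the paper's: use Lemma~\ref{theorem:2.5} together with the Gauss lemma to show that $g(\nu(p),\dot\gamma_p(1))$ and $g(\nu_o(\log_o(p)),\log_o(p))$ have the same sign, and then prove a purely Euclidean characterization of starshapedness of $S=\log_o(\Omega)$ in terms of $\langle\nu_o(v),v\rangle$. That part of your plan is sound. The genuine gap is in your ``if'' direction of the Euclidean statement (normal inequality $\Rightarrow$ starshaped). First, your sign claim at the chosen boundary point is backwards: at an \emph{exit} point $v=t_*w$ (where $tw\in S$ for $t<t_*$ and $(t_*+\varepsilon)w\notin S$), a local defining function $\phi$ with $S=\{\phi<0\}$ satisfies $\phi(tw)<0$ for $t<t_*$ and $\phi(t_*w)=0$, so $\frac{d}{dt}\phi(tw)|_{t=t_*}\geq 0$, i.e.\ $\langle\nu_o(v),v\rangle\geq 0$ --- perfectly consistent with the hypothesis, not a contradiction. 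The point that could contradict the hypothesis is a \emph{re-entry} point $v'=t'w$ where the ray comes back into $S$; there the same computation gives $\langle\nu_o(v'),v'\rangle\leq 0$.

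Second, and more fundamentally, even at the re-entry point the non-strict hypothesis only yields $\langle\nu_o(v'),v'\rangle=0$, i.e.\ the ray may be tangent to $\partial S$ there, and your argument gives no contradiction in that case. This equality case is precisely the crux of the paper's proof: assuming the ray leaves $S$ and re-enters, the authors push the re-entry point in the inward normal direction, obtain a one-parameter family $w(\delta)\in\partial S$ with $g(\nu_o(w(\delta)),w(\delta))=0$, and then, by differentiating the resulting identity $\langle\nabla_{\mathbb{R}^{n-1}}f(c),c\rangle=f(c)$ for a local graph representation $f$ of $\partial S$, show that this whole piece of boundary would have to lie on a straight ray through the origin, contradicting the way $w(\delta)$ was constructed. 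This is exactly why the proposition improves on \cite{DF}*{Theorem~2.1~(ii)}, where the inequality is assumed \emph{strict}; under strict inequality your one-line exit/re-entry argument (with the sign fixed) would indeed suffice, but with ``$\geq 0$'' it does not. A minor additional caveat: in your ``only if'' direction, the claim that $tv\in S$ for all $t\in[0,1)$ when $v\in\partial S$ does not follow from starshapedness alone; it is safer (and enough) to note $tv\in\overline S$ for $t\in[0,1]$ and differentiate a defining function along the ray at $t=1$.
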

\begin{proof}
Let $p\in \partial \Omega$.
Setting
\[
c(t,\varepsilon)\coloneqq t (\log_{o}(p)+\varepsilon \nu_o(\log_o(p)))\in T_o M
\quad\text{ for }t,\varepsilon \in \mathbb{R},
\]
we observe from the Gauss lemma~(see for instance~\cite{Oni}*{Lemma~5.1}) that 
\[
g\left(d\exp_o\left( \frac{\partial}{\partial \varepsilon}\bigg|_{(1,0)} c\right), 
d\exp_o\left( \frac{\partial}{\partial t}\bigg|_{(1,0)} c\right) \right)
=g(\nu_o(\log_o(p)),\log_o(p)). 
\]
It turns out that 
\begin{align*}
d\exp_o\left( \frac{\partial}{\partial \varepsilon}\bigg|_{(1,0)} c \right)
&=\frac{\partial}{\partial \varepsilon}\bigg|_{(1,0)} \exp_o( c(t,\varepsilon)) 
=
\frac{d}{d \varepsilon}\bigg|_{\varepsilon=0}  \exp_o \left( \log_o(p)+\varepsilon \nu_o(\log_o(p))  \right),\\
d\exp_o\left( \frac{\partial}{\partial t}\bigg|_{(1,0)} c\right)
&=\frac{\partial}{\partial t}\bigg|_{(1,0)} \exp_o( c(t,\varepsilon) )=\dot{\gamma}_p(1).
\end{align*}
These with Lemma~\ref{theorem:2.5} imply that 
$g(\nu(p), \dot{\gamma}_p(1))$ and $g(\nu_o(\log_o(p)),\log_o(p))$ 
have the same  sign.

Assume that $\Omega$ is  a  starshaped neighborhood of $o$.
It follows from \cite{DF}*{Theorem~2.1~(i)} that 
\begin{equation}\label{eq:2.2}
g(\nu_o( \log_o(p)), \log_o(p)) \geq 0
\quad \text{for }p\in \partial \Omega.
\end{equation}
which yields $g(\nu(p), \dot{\gamma}_p(1)) \geq 0$.
Thus the proof of  the `if' part of the assertion follows.

Let us prove the `only if' part of the assertion.
Assume that $g(\nu(p), \dot{\gamma}_p(1))\geq 0$ holds for $p\in \partial \Omega$,
which in turn (again thanks to Lemma~\ref{theorem:2.5}) shows \eqref{eq:2.2}.
Set $S\coloneqq \log_o(\Omega)$.
For  a unit tangent vector $v\in T_oM$, set 
\[
\lambda_v\coloneqq \sup \{ \Lambda  >0 \mid \lambda v \in  S \ \text{ for }\lambda \in (0,\Lambda)\}.
\]
In order to show that $\Omega$ is  a  starshaped neighborhood of $o$
(that is, $S$ is starshaped about~$0$),
we have only to prove 
that, for any unit tangent vector $v\in T_oM$, either
$\lambda_v=\infty$ or $\lambda v \notin S$ for $\lambda>\lambda_v$.
Assume that  a unit tangent vector $v\in T_oM$ satisfies $\lambda_v<\infty$
and 
\[
\Lambda_v\coloneqq \inf\{ \lambda>\lambda_v \mid \lambda v \in S\}>\lambda_v.
\]
We find that $\Lambda_v v \in \partial S$ and  
there exists $\varepsilon>0$ such that 
\begin{equation}\label{eq:2.3}
\Lambda_v v- \delta\nu_o( \Lambda_v v ) \in S
\quad
\text{for }\delta\in (0,\varepsilon).
\end{equation}
If follows from \eqref{eq:2.2} for $p=\exp_o(\Lambda_v v)$ that $g(\nu_o( \Lambda_v v), \Lambda_v v) \geq 0$,
which implies that $v$ is not parallel to $\nu_o( \Lambda_v v)$.
We fix an orthonormal basis  $e_1, \ldots, e_{n}$ of $T_oM$ and identify $T_oM$ with $\mathbb{R}^{n}$.
Without loss of generality,  we can choose $e_{n}=-\nu_o(\Lambda_v v)$.
For $x \in T_oM$, set $x^\perp\coloneqq x-g(x, e_{n})e_n$.
By the smoothness of $\partial S$, 
there exists an open neighborhood  $X$ of~$\Lambda_v v$ in~$\overline{S}$ and a function~$f$ defined on $X^\perp\coloneqq \{ x^\perp \mid x\in X \}$ such that 
\[
S \cap X = \left\{x \mid g(x, e_n) > f(x^\perp) , \ x \in X \right\},\qquad
\partial S \cap X  =\left\{  (x^\perp, f(x^\perp)) \mid  x\in X \right\}.
\]
We can regard  $f$ as a function on  a subset of $\mathbb{R}^{n-1}$ and we find 
\[
\nu_o(x) = \frac{(\nabla_{\mathbb{R}^{n-1}} f(x^\perp),-1)}{|(\nabla_{\mathbb{R}^{n-1}} f(x^\perp),-1)|}
\quad
\text{ for }x\in X.
\]
Let $x_0\in X$ satisfy $\Lambda_v v=x_0=(x_0^\perp, f(x_0^\perp))$.
Recall that $x_0^\perp \neq 0$.
It follows from  $\nu_o(\Lambda_v v)=-e_{n}$ that 
$\nabla_{ \mathbb{R}^{n-1} }f(x_0^\perp)=0$ and
\begin{equation}\label{eq:2.4}
\Lambda_v v-\delta \nu_o(\Lambda_v v)=(x_0^\perp, f(x_0^\perp)+\delta)
\quad \text{for }\delta\in \mathbb{R}.
\end{equation}
Setting  $s\coloneqq (\lambda _v+\Lambda_v)/2$,  
we have $s v\notin \overline{S}$. 
Since $\overline{S}$ is  closed, by  decreasing $\varepsilon>0$ if necessary,
we have
\[
s\Lambda_v^{-1}(\Lambda_v v -\delta \nu_o(\Lambda_v v))
=s\Lambda_v v -\delta s\Lambda_v^{-1}\nu_o(\Lambda_v v))
\notin \overline{S}
\quad \text{for }\delta \in (0,\varepsilon).
\]
For each $\delta \in (0,\varepsilon)$,
this with \eqref{eq:2.3} ensures 
\begin{align*}
s(\delta)&\coloneqq \inf\{ t\in (s \Lambda_v^{-1},1] \mid  t  (\Lambda_v v -\delta  \nu_o(\Lambda_v v))\in S\}\in (s \Lambda_v^{-1},1),\\
w(\delta)&\coloneqq s(\delta) (\Lambda_v v -\delta  \nu_o(\Lambda_v v)\in \partial S.
\end{align*}
We naturally set $s(0)\coloneqq 1$ and, by  decreasing $\varepsilon>0$ if necessary, we have
\[
W\coloneqq \{ w(\delta)\mid \delta\in [0, \varepsilon)\} \subset  \partial S \cap X.
\]
We observe from~\eqref{eq:2.4} and the property 
$\partial S \cap X=\left\{  (x^\perp,f(x^\perp))\ | \ x\in X \right\}$
that
\begin{equation}\label{eq:2.5}
w(\delta)
=s(\delta)(x_0^\perp, f(x_0^\perp)+\delta)
=
\left( s(\delta) x_0^\perp, f( s(\delta) x_0^\perp)\right)
\quad\text{ for }\delta\in [0, \varepsilon).
\end{equation}
Let $\delta\in [0, \varepsilon)$.
Given $t>1$, if it is sufficiently close to $1$,  then 
\[
t w(\delta)=t\left( s(\delta) x_0^\perp, f( s(\delta) x_0^\perp)\right) \in S,
\]
hence $t f( s(\delta) x_0^\perp) >f( ts(\delta) x_0^\perp)$ by definition. 
Then it turns out that 
\begin{align*}
 g( \nu_o( w(\delta) )), w(\delta) )
&=\frac{\left\langle \left(\nabla_{ \mathbb{R}^{n-1}} f(s(\delta) x_0^\perp),-1\right), \left( s(\delta) x_0^\perp, f(s(\delta)x_0^\perp) \right) \right\rangle}
{|(\nabla_{\mathbb{R}^{n-1}} f(x^\perp),-1)|}\\
&=\frac{\langle \nabla_{\mathbb{R}^{n-1}} f(s(\delta)x_0^\perp),  s(\delta)x_0^\perp\rangle-f(s(\delta)x_0^\perp)}
{|(\nabla_{\mathbb{R}^{n-1}} f(x^\perp),-1)|}\\
&=\frac{1}{|(\nabla_{\mathbb{R}^{n-1}} f(x^\perp),-1)|}
\left(\lim_{t\downarrow 1} \frac{f(  ts(\delta) x_0^\perp)-f(  s(\delta) x_0^\perp) }{t-1}-f(s(\delta)x_0^\perp)\right)\\
&=\frac{1}{|(\nabla_{\mathbb{R}^{n-1}} f(x^\perp),-1)|}
\cdot \lim_{t\downarrow 1} \frac{f(  ts(\delta) x_0^\perp)-tf(  s(\delta) x_0^\perp) }{t-1}\leq 0.
\end{align*}
This with \eqref{eq:2.2} for $p=\exp_o(w(\delta))$ leads to
\begin{equation}\label{eq:2.6}
g( \nu_o( w(\delta) )), w(\delta) )=0 \quad\text{ for }\delta\in [0, \varepsilon).
\end{equation}
Since $w(0)=\Lambda_v v=(x_0^\perp, f(x_0^\perp))$ and  $\nu_o(\Lambda_v v)=-e_{n}$,
we find $f(x_0^\perp)=0$.

Notice that, if we assumed the strict inequality $g(\nu(p), \dot{\gamma}_p(1))> 0$ for $p\in \partial \Omega$ 
(as in \cite{DF}*{Theorem~2.1~(ii)}), then the proof  would be finished.

Now we will show that $W$ is contained in a straight line.
By~\eqref{eq:2.5}, there exists an interval $I$ with nonempty interior such that 
\[
W=\{( tx_0^\perp, f(tx_0^\perp))  \mid t\in I \}.
\]
Setting $c(t)\coloneqq tx_0^\perp$ for $t\in I$, we use \eqref{eq:2.6} to have
\[
\left\langle \frac{\left(\nabla_{{\mathbb{R}^{n-1}}} f(c),-1\right)}{|(\nabla_{{\mathbb{R}^{n-1}}} f(c), -1)|} ,  ( c, f(c))\right\rangle
=g\left(\nu_o(  ( c, f(c) ) )  , ( c, f(c) )\right)\\ 
=0
\quad\text{ on } I,
\]
that is, 
\[
\langle \nabla_{{\mathbb{R}^{n-1}}} f(c), c\rangle=f(c)
\quad\text{ on } I.
\]
Differentiating this equality gives
\[
\langle \Hess f|_{c} c', c\rangle+
\langle \nabla_{{\mathbb{R}^{n-1}}} f(c), c' \rangle
 =\frac{d}{dt}\langle \nabla_{{\mathbb{R}^{n-1}}} f(c), c \rangle
 =\frac{d}{dt}f(c)= \langle\nabla_{{\mathbb{R}^{n-1}}} f(c), c' \rangle,
\]
consequently
$ \langle\Hess  f|_{c} c' , c \rangle =0$ on $I$,
which is equivalent to 
$\langle \Hess  f|_{c} c' ,  x_0^\perp \rangle=0 $ on~$I$.
Since we have
\begin{align*}
\frac{d}{dt}\left\langle \nabla_{{\mathbb{R}^{n-1}}} f(c) , x_0^\perp \right\rangle
&= \langle\Hess f|_{c} c' , x_0^\perp\rangle =0
\quad \text{ on }  I,\\
\nabla_{{\mathbb{R}^{n-1}}}f(c(0))&=
\nabla_{{\mathbb{R}^{n-1}}}f(x_0^\perp)=0,
\end{align*}
we find that 
$\langle \nabla_{{\mathbb{R}^{n-1}}} f(c), x_0^\perp\rangle=0$ on $I$ hence 
\[
\frac{d}{dt} f(c)
= \langle \nabla_{{\mathbb{R}^{n-1}}} f(c) ,  c'\rangle
=  \langle\nabla_{{\mathbb{R}^{n-1}}} f(c) ,  x_0^\perp\rangle
=0
\quad \text{ on } I.
\]
This implies that $f(c)\equiv f(x_0^\perp)=0$ on $I$
and 
\[
W
 =\{ (tx_0^\perp, f(x_0^\perp)) \mid t\in I \}
 =\{ t(x_0^\perp, 0) \mid t\in I \}.
 \]
This contradicts the property~\eqref{eq:2.5}, that is,
\[
w(\delta)
=s(\delta)(x_0^\perp, f(x_0^\perp)+\delta)
=s(\delta)  (x_0^\perp, \delta)\in W
\quad
\text{ for }\delta \in [0,\varepsilon).
\]
Thus we conclude the proof of the proposition.
\end{proof}
\begin{remark}
Although the Euclidean counterpart of  Proposition~\ref{theorem:2.6} can be found in~\cite{DF}*{Theorem~2.1},
Proposition~\ref{theorem:2.6} gives an improvement since  originally the inequality is assumed to be strict when the if part  of the assertion is proved.
\end{remark}

\section{{Warped products}}\label{section:3}
We briefly review the definition and some properties of a warped product of an open interval and the unit sphere. 
For further details, we refer to \cite{P} (see in particular Sections {4.2.3 and 4.3.4}). 
 
For a smooth function $\sigma$ on $(0,R)$ with $R\in (0,\infty]$,
the \emph{warped product} $(0,R) \times_\sigma \mathbb{S}^{n-1}(1)$ is the product manifold $(0,R) \times \mathbb{S}^{n-1}(1)$ equipped with the Riemannian metric given by 
\[
g\coloneqq  \mathrm{proj}_{(0,R)}{}^{\ast} dr^2 + (\sigma \circ \mathrm{proj}_{(0,R)})^2  \mathrm{proj}_{\mathbb{S}^{n-1}(1)}{}^{\ast} g_{\mathbb{S}^{n-1}(1)},
\]
where
$\mathrm{proj}_{(0,R)}: (0,R) \times \mathbb{S}^{n-1}(1) \to (0,R),  \mathrm{proj}_{\mathbb{S}^{n-1}(1)}: (0,R) \times \mathbb{S}^{n-1}(1) \to \mathbb{S}^{n-1}(1)$ 
are the natural projections respectively,
and 
$dr^2, g_{\mathbb{S}^{n-1}(1)}$ are  the canonical metric  of $(0,R)$ and $\mathbb{S}^{n-1}(1)$.
The case that  $B_o(R)\setminus\{o\}$ is isometric to  $(0,R) \times_\sigma \mathbb{S}^{n-1}(1)$ is formulated as follows.
\begin{definition}
Let $o\in M$ and $R\in (0,\infty]$.
We say that a map $\xi\colon B_o(R)\to \mathbb{R}^{n}$  is a \emph{rotationally symmetric coordinate system at $o$}
if  $(\xi, B_o(R))$ forms a coordinate system at~$o$ with $\xi(o)=0$ and there exists $\psi\colon [0,   {\rho}) \to (0,\infty)$ such that 
\[
g_{ij}(p)=\psi(|\xi(p)|)^2\delta_{ij}\quad \text{ for }p\in   {B_o(R)},
\qquad
\text{where }\rho\coloneqq \sup_{p\in B_o(R)}|\xi(p)|.
\]
We refer to $\psi$ as the \emph{radial conformal factor} of $(B_o(R),\xi)$.
\end{definition}
Let  $\xi\colon B_o(R)\to \mathbb{R}^{n}$ be a rotationally symmetric coordinate system at $o$ with radial conformal factor $\psi$.
For a domain $\Omega\subset B_o(R)$, 
we see that $\Omega$ is  a starshaped neighborhood of $o$  if and only if~$\xi(\Omega)$ is a starshaped set about $0$ in $\mathbb{R}^n$.
Notice that the function $x \mapsto \psi(|x|)^2$  is smooth on $\xi(B_o(R))$
and
\begin{equation}\label{eq:3.1}
\lim_{|x| \to 0}\nabla_{\mathbb{R}^{n}} \psi(|x|)
=\lim_{|x| \to 0} \psi'(|x|){\frac{x}{|x|}}=0
\end{equation}
holds (see for instance \cite{P}*{Subsection~4.3.4}).

For  $u\in C^{\infty}(B_o(R))$, 
setting $U\coloneqq u \circ \xi^{-1}$,
we have
\begin{align*}
\Delta_{q,M}u
=
\frac{1}{\psi(|\xi|)^{q}}\left[ \left( \Delta_{q,\mathbb{R}^n} U\circ \xi\right) +
(n-q)
\left( |\nabla_{\mathbb{R}^{n}}U | \circ \xi \right)^{q-2}
 \left\langle \nabla_{\mathbb{R}^n} \log \psi(|\cdot|),\left(\nabla_{\mathbb{R}^n} U\right)\circ \xi \right\rangle 
\right],
\end{align*}
in $B_o(R)\setminus\{o\}$, and 
\[
\Delta_{q,M} u(o)
=\frac{1}{\psi(0)^q}\Delta_{q,\mathbb{R}^{n}} U (0).
\]
In particular, if $q=2$ we have
\begin{equation}\label{eq:3.2}
\Delta_M u 
=\frac{1}{\psi(|\xi|)^2} 
\left[  \left( \Delta_{\mathbb{R}^{n}} U\right) \circ \xi 
+(n-2)\left\langle  \nabla_{\mathbb{R}^n} \log \psi(|\cdot|), ( \nabla_{\mathbb{R}^{n}} U) \circ \xi \right\rangle
\right]
\end{equation}
in  $B_o(R)\setminus\{o\}$, and
\[
\Delta_{M} u(o)=\frac{1}{\psi(0)^2}\Delta_{\mathbb{R}^{n}} U (0).
\]
\begin{example}
For $\kappa\in \mathbb{R}$, 
let $\mathbb{M}^n_{\kappa}$ be an $n$-dimensional simply connected space form  of constant curvature~$\kappa$.
\begin{itemize}
\setlength{\leftskip}{-15pt}
\item
Needless to say, $\mathbb{M}^n_0=\mathbb{R}^n$ and 
$\xi=\lambda^{-1} \cdot \mathrm{id}_{\mathbb{R}^n}\colon\mathbb{R}^n \to \mathbb{R}^n$ provides 
a rotationally symmetric coordinate system at $0$ for any $\lambda \neq 0$. 
Then $\psi\equiv \lambda$. 
\item
For $\kappa>0$, set $r\coloneqq \kappa^{-1/2}$.
Then $\mathbb{M}^n_\kappa$ is the standard $n$-dimensional sphere of radius~$r$, that is,
\[
\mathbb{M}^n_\kappa=\{ p\in \mathbb{R}^{n+1}\mid  |p|=r\}.
\]
Then $B_{r e_{n+1}}(r\pi)=\mathbb{M}^n_\kappa\setminus\{-r e_{n+1}\}$ holds.
We denote by $\xi\colon B_{r e_{n+1}}(r\pi)\to \mathbb{R}^n$ the stereographic projection from the south pole,  that is,
\[
\xi(p)\coloneqq \frac{r}{r+\langle p, e_{n+1}\rangle}p^{\perp}, 
\]
where $p^\perp\in \mathbb{R}^n$ satisfies $p=(p^\perp, \langle p, e_{n+1}\rangle)$.
Then this is a rotationally symmetric coordinate system at $r e_{n+1}$ 
and its radial conformal factor is given by
\[
\psi(t)\coloneqq \frac{2r^2}{r^2+t^2}:[0,\infty)\to\mathbb{R}.
\]
\item
In the case of $\kappa<0$, namely the hyperbolic space, we use the Poincar\'e disk model.
Let $r\coloneqq |\kappa|^{-1/2}$ and
$\mathbb{B}^n(r)$  be the disk in $\mathbb{R}^n$ centered at the origin with radius $r$.
Consider the inclusion map $\xi\colon \mathbb{B}^n(r) \to \mathbb{R}^n$.
For the Riemannian metric on $\mathbb{B}^n(r)$, choose 
\[
g_p\left(\frac{\partial}{\partial \xi^i}, \frac{\partial}{\partial \xi^j} \right)\coloneqq \frac{4r^4}{(r^2-|p|^2)^2} \delta_{ij}.
\]
Then $B_{0}(\infty)=\mathbb{B}^n(r)$
and  $\xi\colon \mathbb{B}^n(r) \to \mathbb{R}^n$ is a rotationally symmetric coordinate system at $o$ 
and its radial conformal factor is given by
\[
\psi(t)\coloneqq \frac{2r^2}{r^2-t^2}:[0,r)\to \mathbb{R}.
\]
\end{itemize}
\end{example}

\section{Viscosity solutions}\label{section:4}
Viscosity solutions were introduced by Crandall and Lions for  Euclidean space in the 1980s and immediately became very popular.
The literature about viscosity solutions is huge, and for simplicity we only refer to the classical  reference~\cite{CIL}.
Here we briefly recall some basic definitions and results as needed in this paper.

\begin{definition}
Let $D$ be a domain in $\mathbb{R}^n$.
Let $h:D\to \mathbb{R}$ be upper (resp.\,lower) semicontinuous. 
\begin{itemize}
\setlength{\leftskip}{-15pt}
\item
For $x \in D$,
we say that a $C^2$-function $\varphi$ defined on an open neighborhood of $x$  \emph{touches~$h$ at $x$ from above $($resp.\,below$)$}
if $\varphi(x)=h(x)$ and there exists an open neighborhood $O$ of  $x$  in $D$ such that 
\[
 \varphi\geq h\quad(\text{resp.\,}\varphi\leq h)\quad\text{ in }O.
\]
\item
Let $F:D \times \mathbb{R}\times \mathbb{R}^{n}\to\mathbb{R}$.
We say that $h$ is a \emph{viscosity subsolution} (resp.~\emph{supersolution}) to 
\begin{equation}\label{eq:4.1}
\Delta_{q,\mathbb{R}^{n}}u =F(\cdot, u,\nabla_{\mathbb{R}^n} u)\quad\text{ in } D
\end{equation}
if, for $x\in D$, 
any $C^2$-function $\varphi$  touching $h$ at $x$ from above (resp.\,below) satisfies
\[
\qquad \Delta_{q,\mathbb{R}^{n}} \varphi(x)\geq F(x,\varphi(x),\nabla_{\mathbb{R}^{n}} \varphi(x))
\quad
\left(\text{resp.}\,\Delta_{q,\mathbb{R}^{n}} \varphi(x)\leq F(x,\varphi(x),\nabla_{\mathbb{R}^{n}} \varphi(x))\right).
\]
\item
A \emph{viscosity solution} to \eqref{eq:4.1} is  a continuous function which is subsolution and supersolution at the same time.
\end{itemize}
\end{definition}
It is easily seen that, 
for $C^2$-functions $\varphi$ and $h$ defined on an open neighborhood of $x\in D$, if $\varphi$ touches~$h$ at $x$ from above (resp.\,below),
then 
\[
\nabla_{\mathbb{R}^n} \varphi (x)= \nabla_{\mathbb{R}^n}h (x)
\]
and $\mathop{\mathrm{Hess}_{\mathbb{R}^n}} \varphi (x)- \mathop{\mathrm{Hess}_{\mathbb{R}^n}}h (x)$ is nonnegative (resp.\,nonpositive) definite,
whence
\[
\Delta_{q,\mathbb{R}^n} \varphi (x) \geq  \Delta_{q,\mathbb{R}^n}h (x)
\quad 
\left(\text{resp.}\, \Delta_{q,\mathbb{R}^n} \varphi (x) \leq  \Delta_{q,\mathbb{R}^n}h (x)\right).
\]
Notice that if a viscosity subsolution (resp.\,supersolution) is of class $C^2$, then it is a classical subsolution (resp.\,supersolution).

The theory of viscosity solutions in Riemannian manifolds essentially resembles that in Euclidean space and 
the relevant definitions are just the exact translation of the ones given above for the Euclidean setting (see for instance~\cites{AFS,GP}), then we do not repeat here the details.
Indeed, we use only the theory of viscosity solutions in Euclidean space.

We close this section by recalling the weak comparison principle.
\begin{definition} 
We say that the \emph{weak comparison principle} holds for equation \eqref{eq:4.1} 
if the following holds true.
\begin{itemize}
\setlength{\leftskip}{-15pt}
\item
If $u$ is a supersolution and $h$ is a viscosity subsolution to \eqref{eq:4.1} in $D$ such that $h\leq u$ on $\partial D$, then $h\leq u$ in $D$.
\end{itemize}
\end{definition}
%

\section{Main results}\label{section:5}
Throughout this section, 
let  $R\in (0,\infty]$, $\xi\colon B_o(R)\to \mathbb{R}^{n}$  be  a rotationally symmetric coordinate system  at $o$ with radial conformal factor $\psi$.
Let $\Omega_0, \Omega_1$ be two  starshaped neighborhoods of $o$ 
such that $\overline{\Omega_1}\subset \Omega_0 \subset B_o(R)$, $\Omega_0$ is regular, and $\xi(\Omega_0)$ is bounded.
Set
\[
 \Omega\coloneqq \Omega_0\setminus\overline{\Omega_1}.
\]

Let us consider the following elliptic boundary value problem
\begin{equation}\label{eq:5.1}
\begin{cases}
\Delta_{M} u=0 &\text{in }\Omega,\\
u=0 & \text{on }\partial \Omega_0,\\
u=1& \text{in }\overline{\Omega_1}.
\end{cases}
\end{equation}
Applying the standard theory of elliptic equation (see for instance \cite{GT}*{Chapter 6}) we see that a unique classical solution $u$ exists
and that $0\leq u \leq 1$ on $\overline{\Omega_0}$.
Our main result consists in proving that every superlevel set 
\[
L^+_{\ell}\coloneqq \{p\in\Omega_0 \ | \ u(p)>\ell\} \quad\text{for }\ell\in(0,1)
\]
is a starshaped neighborhood of $o$,  that is, $\log_o (L^+_{\ell})$ is a starshaped  set about $0$ in $T_oM$.
In analogy with the notion of quasi-concave function (a function whose superlevel sets are all convex), 
we introduce the following definition.
\begin{definition}
Let $o\in M$.
A  function on a domain of $M$  is said \emph{quasi-starshaped} about~$o$
if its superlevel sets are all starshaped neighborhoods of $o$. 
\end{definition}
According to this definition, we will prove that a classical solution $u$ to \eqref{eq:5.1} is quasi-starshaped. 
To this aim, we define a function on $\Omega_0$ by 
\begin{equation}\label{eq:5.2}
u^\ast(p)\coloneqq \sup\{u(\gamma_p(t)) \mid  t\in[1,T_{\Omega_0,p})\} \quad 
\text{ for }p\in \Omega_0.
\end{equation}
Notice that $u^\ast$ is quasi-starshaped about $o$ and $u^\ast\geq u$.
Indeed, $u^\ast$ is the smallest quasi-starshaped function greater or equal to $u$ then  we call  $u^\ast$ the quasi-starshaped envelope of $u$. 
Straightforwardly, we have
\[
u\leq u^\ast\leq 1\quad \text{in }\overline\Omega_0,\qquad u^\ast=1\quad\text{in }\overline\Omega_1,\qquad u^\ast=0\quad \text{on }\partial\Omega_0\,\text{ if }\partial\Omega_0\neq\emptyset.
\]
Now we are ready to state our main theorem.
\begin{theorem}\label{theorem:5.2}
Let $\Omega_0$, $\Omega_1$ and $\Omega$ be as stated in the preamble of this section.
Then any classical solution $u$ to problem~\eqref{eq:5.1} is quasi-starshaped about~$o$,  i.e., every superlevel set of $u$ is a starshaped neighborhood of $o$. 
\end{theorem}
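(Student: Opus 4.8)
The plan is to show that the quasi-starshaped envelope $u^\ast$ defined in \eqref{eq:5.2} coincides with the classical solution $u$; once this is known, every superlevel set of $u=u^\ast$ is a starshaped neighborhood of $o$ by the very definition of $u^\ast$. Since $u^\ast\ge u$ everywhere, $u^\ast=1$ on $\overline{\Omega_1}$ and $u^\ast=0$ on $\partial\Omega_0$, it suffices to prove $u^\ast\le u$ in $\Omega$. We will transport everything to Euclidean space via the rotationally symmetric coordinate system $\xi$: set $U\coloneqq u\circ\xi^{-1}$ and $U^\ast\coloneqq u^\ast\circ\xi^{-1}$ on $D\coloneqq\xi(\Omega)$. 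By \eqref{eq:3.2}, $U$ is a classical solution of the linear elliptic equation
\[
\Delta_{\mathbb{R}^n}U+(n-2)\langle\nabla_{\mathbb{R}^n}\log\psi(|\cdot|),\nabla_{\mathbb{R}^n}U\rangle=0\quad\text{in }D,
\]
which is of the form \eqref{eq:4.1} with $q=2$ and $F(x,s,\zeta)=-(n-2)\langle\nabla_{\mathbb{R}^n}\log\psi(|x|),\zeta\rangle$; note $F$ does not depend on $s$ and is linear in $\zeta$ with smooth coefficients (smoothness at $x=0$, hence on all of $\xi(B_o(R))$, follows from the remark after \eqref{eq:3.1}). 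Since $\Omega$ is a starshaped ring, $D=\xi(\Omega_0)\setminus\overline{\xi(\Omega_1)}$ is a Euclidean starshaped ring, and $U^\ast(x)=\sup\{U(tx)\mid t\in[1,\mu_x)\}$ where $tx$ ranges over the segment from $x$ to $\partial(\xi(\Omega_0))$.

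The heart of the argument is to prove that $U^\ast$ is a viscosity subsolution of the same equation in $D$; the weak comparison principle for this linear equation (which holds by standard theory, e.g.\ \cite{GT}*{Chapter~3}, and translates verbatim to the viscosity setting) then gives $U^\ast\le U$ in $D$ because the boundary inequality $U^\ast\le U$ holds on $\partial D$. To see the subsolution property, fix $x_0\in D$ and a $C^2$ function $\varphi$ touching $U^\ast$ at $x_0$ from above. By compactness of $[1,\mu_{x_0}]$ and continuity of $U$, there is $t_0\in[1,\mu_{x_0})$ with $U^\ast(x_0)=U(t_0x_0)$; here one must check $t_0<\mu_{x_0}$, i.e.\ the sup is attained at an interior point of the segment, using that $U$ vanishes on $\partial(\xi(\Omega_0))$ while $U^\ast(x_0)>0$ (for $x_0$ with $U^\ast(x_0)=0$ the subsolution inequality is checked separately, or such points lie on $\partial D$). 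Then the rescaled function $\tilde\varphi(y)\coloneqq\varphi(t_0^{-1}y)$ touches $U$ at $y_0\coloneqq t_0x_0$ from above: indeed $\tilde\varphi(y_0)=\varphi(x_0)=U(t_0x_0)=U(y_0)$, and for $y$ near $y_0$ we have $\tilde\varphi(y)=\varphi(t_0^{-1}y)\ge U^\ast(t_0^{-1}y)\ge U(y)$, the last step because $t_0^{-1}y$ lies (for $y$ close to $y_0$) on the segment through $y$ scaled toward $\partial(\xi(\Omega_0))$ by a factor $\ge1$, so $U^\ast(t_0^{-1}y)\ge U(t_0\cdot t_0^{-1}y)=U(y)$. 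Since $U$ is a classical (hence viscosity) solution, $\Delta_{\mathbb{R}^n}\tilde\varphi(y_0)+(n-2)\langle\nabla\log\psi(|y_0|),\nabla\tilde\varphi(y_0)\rangle\ge0$. Now one computes the chain rule: $\nabla\tilde\varphi(y_0)=t_0^{-1}\nabla\varphi(x_0)$, $\Delta_{\mathbb{R}^n}\tilde\varphi(y_0)=t_0^{-2}\Delta_{\mathbb{R}^n}\varphi(x_0)$, and $|y_0|=t_0|x_0|$, so the inequality becomes
\[
t_0^{-2}\Delta_{\mathbb{R}^n}\varphi(x_0)+(n-2)t_0^{-1}\langle\nabla\log\psi(t_0|x_0|),\nabla\varphi(x_0)\rangle\ge0.
\]
The main obstacle is to deduce from this the desired inequality at $x_0$, namely $\Delta_{\mathbb{R}^n}\varphi(x_0)+(n-2)\langle\nabla\log\psi(|x_0|),\nabla\varphi(x_0)\rangle\ge0$. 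Multiplying the displayed inequality by $t_0^2\ge1$ handles the Laplacian term, but one is left needing to control the difference between $t_0\langle\nabla\log\psi(t_0|x_0|),\nabla\varphi(x_0)\rangle$ and $\langle\nabla\log\psi(|x_0|),\nabla\varphi(x_0)\rangle$. Writing $\nabla_{\mathbb{R}^n}\log\psi(|x|)=(\psi'(|x|)/\psi(|x|))\,x/|x|$, and using that $\nabla\varphi(x_0)=\nabla U^\ast(x_0)$ points in a direction along which $U^\ast$ (hence, at $y_0$, $U$) is nonincreasing outward — more precisely, radial monotonicity of $U^\ast$ forces $\langle\nabla\varphi(x_0),x_0\rangle\le0$ — one expects a sign condition on $\psi'$, or more likely a structural monotonicity of the coefficient $r\mapsto (n-2)\psi'(r)/\psi(r)$, to close the gap; I anticipate that the correct hypothesis (to be imposed on $\psi$, or verified to be automatic from the geometry) is exactly what makes $t\mapsto(n-2)t\,\psi'(t|x_0|)/\psi(t|x_0|)$ monotone in the right direction, so that the cross term only improves when passing from $t_0$ to $1$. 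Establishing this comparison of coefficients, and isolating precisely which condition on $\psi$ (equivalently, on the warping function $\sigma$) it requires, is the crux of the proof; the remaining pieces — transporting back to $M$, checking $u^\ast$ is genuinely the envelope, and invoking comparison — are routine.
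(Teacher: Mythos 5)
Your overall strategy is exactly the paper's: pass to Euclidean coordinates via $\xi$, show the envelope $U^\ast$ is a viscosity subsolution of the transformed equation, and conclude $U^\ast\le U$ by comparison. The rescaling device (testing at $y_0=t_0x_0$ with $\tilde\varphi(y)=\varphi(t_0^{-1}y)$, equivalent to the paper's $\widetilde U(y)=U(t_xy)$) is also right. But at the decisive step you stop and conjecture that one needs a sign or monotonicity hypothesis on $r\mapsto\psi'(r)/\psi(r)$ to compare the drift coefficients at $t_0|x_0|$ and $|x_0|$. No such hypothesis is needed, and introducing one would prove a strictly weaker statement than the theorem (which covers, e.g., both the sphere, where $\psi$ is decreasing, and hyperbolic space, where $\psi$ is increasing). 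The missing idea is the first-order condition at the maximizing parameter, equation \eqref{eq:5.5} of the paper: when $U^\ast(x_0)>U(x_0)$, the supremum of $t\mapsto U(tx_0)$ over $[1,T_{x_0})$ is attained at an interior point $t_0\in(1,T_{x_0})$ (because $U(x_0)<U^\ast(x_0)$ and $U(tx_0)\to 0$ as $t\uparrow T_{x_0}$), so
\[
\bigl\langle x_0,\nabla_{\mathbb{R}^n}U(t_0x_0)\bigr\rangle=\frac{\partial}{\partial t}\Big|_{t=t_0}U(tx_0)=0 .
\]
Since $U$ is classical and $\tilde\varphi$ touches it from above, $\nabla\varphi(x_0)=t_0\nabla U(t_0x_0)$ is therefore orthogonal to $x_0$. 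Because the drift $\nabla_{\mathbb{R}^n}\log\Psi(y)=\frac{\psi'(|y|)}{\psi(|y|)}\frac{y}{|y|}$ is purely radial, this single identity annihilates \emph{both} drift terms — the one at $t_0x_0$ in your displayed inequality and the one at $x_0$ in the target inequality — so the subsolution property reduces to $\Delta_{\mathbb{R}^n}\varphi(x_0)\ge t_0^2\Delta_{\mathbb{R}^n}U(t_0x_0)=0=-(n-2)\langle\nabla_{\mathbb{R}^n}\log\Psi(x_0),\nabla_{\mathbb{R}^n}\varphi(x_0)\rangle$, with no condition on $\psi$ whatsoever.

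Your substitute observation, that quasi-starshapedness of $U^\ast$ gives $\langle\nabla\varphi(x_0),x_0\rangle\le 0$, is true but too weak: combined with a coefficient whose sign depends on $\psi'$ it cannot yield the required inequality in general, which is why your proposed ``structural monotonicity of $(n-2)\psi'/\psi$'' route would not close the gap without restricting the class of warped products. Replace that paragraph with the exact cancellation above (i.e., use that $t_0$ is an interior maximum, not merely that $U^\ast$ is radially nonincreasing), and the rest of your argument — the touching of $U$ by $\tilde\varphi$, the boundary inequality $U^\ast\le U$ on $\partial X$, and the weak comparison principle — goes through as in the paper.
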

\begin{proof}
The idea is to prove that $u$ must coincide with the function $u^\ast$ given by~\eqref{eq:5.2}, using the weak comparison principle. 
To simplify the argument, we first translate problem \eqref{eq:5.1} into a problem in  Euclidean space.

Let 
\[
X_0\coloneqq \xi(\Omega_0), \qquad
X_1\coloneqq \xi(\Omega_1), \qquad
X\coloneqq \xi(\Omega)=X_0\setminus\overline{X_1}.
\]
We also set 
\[
\Psi(x)\coloneqq \psi(|x|) \quad \text{for }x\in X_0.
\]
For $x \in X_0$, 
define $T_x \in [1,\infty]$ by 
\[
T_x\coloneqq \sup\{  t\geq1 \mid t x \in \overline{X_0} \},
\]
similarly to  $T_{\Omega_0,p}$.
Then $T_x >1$ for $x \in X$ and  $T_{t x}= t^{-1} T_x$ for $x\in X_0$ with $x \neq 0$ and $t\in (0, T_x)$.
Set
\[
U\coloneqq u \circ \xi^{-1}: \overline{X_0}\to \mathbb{R}.
\]
Then  $U\in C(\overline{X_0})\cap C^2(X_0)$ and  $U$ solves
\begin{equation*}
\begin{cases}
\Delta_{\mathbb{R}^{n}} U+
(n-2)\left\langle \nabla_{\mathbb{R}^n} \log \Psi, \nabla_{\mathbb{R}^{n}}  U\right\rangle =0 &\text{ in }X,\\
U=0 &\text{on }\partial X_0,\\
U=1&\text{on }\overline{X_1},
\end{cases}
\end{equation*}
by \eqref{eq:5.1} together with \eqref{eq:3.2}.
We prove that  $u$ is quasi-starshaped about $o$ if and only if $U$ is quasi-starshaped about $0$.
First, we set $U^\ast\coloneqq u^\ast\circ \xi^{-1}: \overline{X_0}\to \mathbb{R}$, that is,
\[
U^\ast(x):
=\sup\left\{ U(t x) \bigm|  t \in [1,T_x) \right\}
\quad
\text{ for }x\in \overline{X_0}.
\]
For $x \in X_0$,
since $U(x)>0$ and $U(tx)\to 0$ as $t\uparrow T_x$
we find $t_x \in [1,T_x)$ such that $U^\ast(x)=U(t_x x)$. 
Then $U^\ast$ is continuous on $\overline{X_0}$ and satisfies 
\begin{equation*}
0\leq U^\ast\leq 1\quad\text{in }\overline{X_0},
\qquad 
U^\ast=1\quad\text{on }\overline{X_1},
\qquad 
U^\ast=0
\quad \text{on }\partial X_0.
\end{equation*}
Fix $\ell\in (0,1)$.
It is easily seen that every superlevel set 
\[
L^+_{\ell}\coloneqq\{x \in X_0 \mid U^\ast (x)>\ell\}
\]
is a starshaped set  about $0$ in $\mathbb{R}^{n}$.
Indeed, it follows from  $o\in \Omega_1$ that $U^\ast(0)=1$ and hence  $0\in L^+_{\ell}$.
For $x \in L^+_{\ell}$ and $t\in[0,1]$, 
we find $T_x =tT_{tx}$, which yields 
\begin{align*}
U^{\ast}( tx)
&=\max\left\{ U( \tau  t x  )  \bigm|  \tau \in [1,T_{tx}] \right\}
=\max\left\{ U(\tau x ) \bigm|  \tau \in [t, T_x] \right\}\\
&\geq
\max\left\{ U(\tau x) \bigm|  \tau \in [1,T_x] \right\}
=U^\ast (x)\\
&>\ell.
\end{align*}
Thus $ tx \in L^+_{\ell}$.
In addition, we notice that 
if  $\overline{U}   \in C(\overline{X_0})$ is {quasi-starshaped} about $0$  and $U\leq \overline{U}$ on $\overline{X_0}$, 
then $U^\ast \leq \overline{U}$ holds on {$X_0$}. 
%

We show that $U^\ast$ is  a viscosity subsolution to
\begin{equation}\label{eq:5.3}
\Delta_{\mathbb{R}^{n}} h
=-(n-2)\left\langle \nabla_{\mathbb{R}^n} \log \Psi, \nabla_{\mathbb{R}^{n}}  h\right\rangle
\quad \text{in } X.
\end{equation}
Let $x \in X$ be such that $U^\ast(x)=U(x)$.   
Then any $C^2$-function $\varphi$ touching  $U^\ast$ at $x$ from above  also touches $U$ at $x$ from above  
and it holds
\begin{align*}
\Delta_{\mathbb{R}^{n}} \varphi(x)
\geq 
\Delta_{\mathbb{R}^{n}} U(x)
=-(n-2) \left\langle \nabla_{\mathbb{R}^n} \Psi(x), \nabla_{\mathbb{R}^{n}}  U(x)\right\rangle
=-(n-2) \left\langle \nabla_{\mathbb{R}^n} \Psi(x), \nabla_{\mathbb{R}^{n}}  \varphi(x)\right\rangle.
\end{align*}
Let $x \in X$ be such that $U^\ast(x)> U(x)$. 
There exists $t_x \in (1,T_x)$  such that $U^\ast(x)=U(t_x x)$.
Setting $B_x\coloneqq \{ y\in X \mid t_x y \in X \}$, which is a neighborhood of $x$,
we define a function 
$
\widetilde{U}:B_x\to\mathbb{R}$ by
\[
\widetilde{U}(y)\coloneqq U( t_x y)
\quad
\text{ for } y\in B_x.
\]
By construction, we have
\[
\widetilde{U}\leq U^\ast \quad\text{ in } B_x\quad \text{ and }\quad \widetilde{U}(x)=U^\ast(x).
\]
Hence, if a $C^2$-test function $\varphi$  touches  $U^\ast$ at ${x}$ from above, then it  touches  also $\widetilde{U}$ at ${x}$ from above, whence 
\begin{align}
\label{eq:5.4}
\nabla_{\mathbb{R}^{n}} \varphi (x)
&=\nabla_{\mathbb{R}^{n}}  \widetilde{U}(x)
=t_x \nabla_{\mathbb{R}^{n}} U (t_x x), \\ \notag
 \Delta_{\mathbb{R}^{n}} \varphi (x)
 &\geq  \Delta_{\mathbb{R}^{n}} \widetilde{U}(x)
=t_x^2  \Delta_{\mathbb{R}^{n}} U (t_{x} x).
\end{align}
On the other hand, 
we observe from the choice of $t_{x}$ that 
\begin{align}\label{eq:5.5}
\left \langle x, \nabla_{\mathbb{R}^{n}}U(t_{x} x) \right\rangle 
=\frac{\partial}{\partial  t}\bigg|_{t_{x}} U(t x) =0.
\end{align}
These provides
\begin{align*}
\Delta_{\mathbb{R}^{n}} \varphi (x)\geq
t_x^2  \Delta_{\mathbb{R}^{n}} U (t_{x} x)
&=-t_x^2 (n-2)
\left\langle \nabla_{\mathbb{R}^n} \log \Psi(t_x x), \nabla_{\mathbb{R}^{n}}  U(t_x x)\right\rangle\\
&=0\\
&=-(n-2)\left\langle \nabla_{\mathbb{R}^n} \log \Psi(x), \nabla_{\mathbb{R}^{n}}  \varphi(x)\right\rangle,
\end{align*}
where we use the property
\[
\nabla_{\mathbb{R}^n} \log \Psi(y)=\frac{\psi'(|y|)}{\psi(|y|)} \cdot \frac{y}{|y|}
\quad \text{for }y\in X.
\]
Thus $U^\ast$ is a viscosity subsolution to~\eqref{eq:5.3} in $X$  as desired.

Since the weak comparison principle holds for~\eqref{eq:5.3} (see for instance \cite{CIL}*{Theorem~3.3}), 
we obtain $U^\ast\leq U$ in $X$  hence $U=U^\ast$ in $X$.
Then we see that $U$ is quasi-starshaped about $0$, that is, $u$ is quasi-starshaped. The proof is complete.
\end{proof}

Similarly to Problem \eqref{eq:5.1}, we can treat the following problem
\begin{equation}\label{eq:5.6}
\begin{cases}
\Delta_{q,M} u=0 &\text{in }\Omega,\vspace{5pt}\\
u=0 & \text{on }\partial \Omega_0,\\
u=1& \text{on }\overline{\Omega_1},
\end{cases}
\end{equation}
and prove the following result.
%
%
\begin{theorem}\label{theorem:5.3}
Let $\Omega_0$, $\Omega_1$ and $\Omega$ be as stated in the preamble of this section.
Then any differentiable viscosity solution $u$ to problem~\eqref{eq:5.6} is {quasi-starshaped} about~$o$,  i.e., every superlevel set of $u$ is a starshaped neighborhood of $o$. 
\end{theorem}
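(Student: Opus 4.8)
\textbf{Proof plan for Theorem~\ref{theorem:5.3}.} The strategy mirrors that of Theorem~\ref{theorem:5.2}: we transfer the problem to Euclidean space via the rotationally symmetric coordinate system $\xi$, introduce the quasi-starshaped envelope $U^\ast$ of the pulled-back solution $U\coloneqq u\circ\xi^{-1}$ defined exactly as in \eqref{eq:5.2}, show that $U^\ast$ is a viscosity subsolution of the Euclidean equation obtained from $\Delta_{q,M}u=0$, and then invoke the weak comparison principle to conclude $U^\ast\le U$, whence $U=U^\ast$ is quasi-starshaped about $0$ and therefore $u$ is quasi-starshaped about $o$. Using the formula for $\Delta_{q,M}$ recorded in Section~\ref{section:3}, the equation $\Delta_{q,M}u=0$ in $\Omega\setminus\{o\}$ becomes, for $U=u\circ\xi^{-1}$,
\begin{equation*}
\Delta_{q,\mathbb{R}^n}U+(n-q)\,|\nabla_{\mathbb{R}^n}U|^{q-2}\,\big\langle\nabla_{\mathbb{R}^n}\log\Psi,\nabla_{\mathbb{R}^n}U\big\rangle=0
\quad\text{in }X\coloneqq\xi(\Omega),
\end{equation*}
with $\Psi(x)\coloneqq\psi(|x|)$; this is the degenerate-elliptic equation \eqref{eq:4.1} with $F(x,r,p)=-(n-q)|p|^{q-2}\langle\nabla_{\mathbb{R}^n}\log\Psi(x),p\rangle$, for which the weak comparison principle holds.

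The verification that $U^\ast$ is a viscosity subsolution splits into the same two cases as before. At a point $x\in X$ with $U^\ast(x)=U(x)$, any $C^2$ function $\varphi$ touching $U^\ast$ from above also touches $U$ from above, and since $U$ is assumed differentiable the hypothesis enters here: one needs that a differentiable viscosity solution is in particular a viscosity subsolution whose gradient agrees with $\nabla_{\mathbb{R}^n}U(x)$, so the subsolution inequality for $\varphi$ follows from that for $U$ together with $\nabla_{\mathbb{R}^n}\varphi(x)=\nabla_{\mathbb{R}^n}U(x)$. At a point $x$ with $U^\ast(x)>U(x)$, we pick $t_x\in(1,T_x)$ realizing the supremum, set $\widetilde U(y)\coloneqq U(t_xy)$ on a neighborhood of $x$, note $\widetilde U\le U^\ast$ with equality at $x$, and observe that $\widetilde U$ is again a viscosity solution of the same equation rescaled: because $\Delta_{q,\mathbb{R}^n}$ is $q$-homogeneous under the scaling $y\mapsto t_xy$ and $\nabla_{\mathbb{R}^n}\log\Psi(t_xy)=\frac{\psi'(t_x|y|)}{\psi(t_x|y|)}\frac{y}{|y|}$ is parallel to $y$, the first-order optimality condition $\langle x,\nabla_{\mathbb{R}^n}U(t_xx)\rangle=0$ forces the zeroth- and first-order terms to vanish at $x$. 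Concretely, a test function $\varphi$ touching $U^\ast$ from above at $x$ also touches $\widetilde U$ from above, so $\nabla_{\mathbb{R}^n}\varphi(x)=t_x\nabla_{\mathbb{R}^n}U(t_xx)$, and then
\begin{equation*}
\Delta_{q,\mathbb{R}^n}\varphi(x)\ \ge\ \Delta_{q,\mathbb{R}^n}\widetilde U(x)\ =\ t_x^{q}\,\Delta_{q,\mathbb{R}^n}U(t_xx)\ =\ -t_x^{q}(n-q)|\nabla_{\mathbb{R}^n}U(t_xx)|^{q-2}\big\langle\nabla_{\mathbb{R}^n}\log\Psi(t_xx),\nabla_{\mathbb{R}^n}U(t_xx)\big\rangle\ =\ 0,
\end{equation*}
the last equality because $\nabla_{\mathbb{R}^n}\log\Psi(t_xx)$ is parallel to $x$ and $\langle x,\nabla_{\mathbb{R}^n}U(t_xx)\rangle=0$; and $0=-(n-q)|\nabla_{\mathbb{R}^n}\varphi(x)|^{q-2}\langle\nabla_{\mathbb{R}^n}\log\Psi(x),\nabla_{\mathbb{R}^n}\varphi(x)\rangle=F(x,\varphi(x),\nabla_{\mathbb{R}^n}\varphi(x))$ by the same parallelism and optimality. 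One must also handle the trivial subcase $\nabla_{\mathbb{R}^n}U(t_xx)=0$, where $\Delta_{q,\mathbb{R}^n}$ is not literally defined, by the usual convention that the subsolution inequality is then vacuous or by noting $\nabla_{\mathbb{R}^n}\varphi(x)=0$ so both sides of the inequality vanish.

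The one genuinely new difficulty compared with Theorem~\ref{theorem:5.2} is \textbf{the degeneracy of the $q$-Laplacian at critical points of the gradient}, which is precisely why the theorem is stated only for \emph{differentiable} viscosity solutions. Two points deserve care: first, the identity $\Delta_{q,\mathbb{R}^n}\widetilde U(x)=t_x^{q}\Delta_{q,\mathbb{R}^n}U(t_xx)$ should be read in the viscosity sense, so that the chain of inequalities above is really a comparison of test functions rather than of classical derivatives — one checks that $y\mapsto\varphi(y)$ touches $\widetilde U$ from above at $x$ iff $z\mapsto\varphi(z/t_x)$ touches $U$ from above at $t_xx$, and the $q$-homogeneity of $\Delta_{q,\mathbb{R}^n}$ under dilations converts the subsolution inequality accordingly. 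Second, in the case $U^\ast(x)=U(x)$ one should confirm that differentiability of $u$ (equivalently of $U$) is enough to guarantee that $\nabla_{\mathbb{R}^n}\varphi(x)=\nabla_{\mathbb{R}^n}U(x)$ whenever $\varphi$ touches $U$ from above, which is immediate, and that consequently the value $F(x,\varphi(x),\nabla_{\mathbb{R}^n}\varphi(x))$ coincides with $F(x,U(x),\nabla_{\mathbb{R}^n}U(x))$ so the subsolution property passes from $U$ to $\varphi$. Once these are in place, the weak comparison principle for \eqref{eq:4.1} with the above $F$ gives $U^\ast\le U$ in $X$, hence $U=U^\ast$, and the equivalence (established as in the proof of Theorem~\ref{theorem:5.2}) between quasi-starshapedness of $U$ about $0$ and quasi-starshapedness of $u$ about $o$ finishes the proof.
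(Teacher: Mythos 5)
Your proposal is correct and follows essentially the same route as the paper: translate to the Euclidean equation via $\xi$, show the quasi-starshaped envelope $U^\ast$ is a viscosity subsolution using the crucial orthogonality $\langle x,\nabla_{\mathbb{R}^n}U(t_xx)\rangle=0$ together with the $t_x^q$-scaling of $\Delta_{q,\mathbb{R}^n}$ and the radial direction of $\nabla_{\mathbb{R}^n}\log\Psi$, then conclude $U=U^\ast$ by the weak comparison principle. If anything, you are slightly more explicit than the paper about reading the scaling identity in the viscosity sense and about the degenerate case $\nabla_{\mathbb{R}^n}U(t_xx)=0$, which is consistent with the paper's argument.
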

\begin{proof}
The idea is the same, that is to prove that $u$ must coincide with its quasi starshaped envelope $u^\ast$,  using the weak comparison principle. 
We first translate problem \eqref{eq:5.1} into a problem in Euclidean space.
We use the same notations $X_0$, $X_1$, $X$, $\Psi$,  $t_{x}$ and so on.
Also, setting $U:=u\circ \xi^{-1}$, we find that 
$U\in C(\overline{X_0})$ and  $U$ solves
\begin{equation*}
\begin{dcases}
 \Delta_{q,\mathbb{R}^n} U +
(n-q)
 |\nabla_{\mathbb{R}^{n}}U | ^{2-q}
 \left\langle  \nabla_{\mathbb{R}^n} \log \Psi,\nabla_{\mathbb{R}^n} U \right\rangle 
=0 &\text{in }X,\\
U=0 &\text{on }\partial X_0,\\
U=1&\text{on }\overline{X_1},
\end{dcases}
\end{equation*}
in the viscosity sense.
Since the proof works exactly in the same way to the proof of Theorem~\ref{theorem:5.2}, 
we prove that $U^\ast\coloneqq u^\ast\circ \xi^{-1}$ is a viscosity subsolution to
\begin{equation}\label{eq:5.7}
\Delta_{q,\mathbb{R}^{n}} h
=-(n-q)|\nabla_{\mathbb{R}^n} h|^{q-2}\left\langle \nabla_{\mathbb{R}^n} \log \Psi, \nabla_{\mathbb{R}^{n}}  h\right\rangle
\quad \text{in } X.
\end{equation}
Again, \eqref{eq:5.5} holds, which is crucial. 
For the function $\widetilde{U}(y):=U(t_x y)$ defined around $x$, 
\begin{equation}\label{eq:5.8}
\Delta_{q,\mathbb{R}^{n}} \widetilde{U}(x)
=t_x^q  \Delta_{q,\mathbb{R}^{n}} U(t_{x} x)
\end{equation}
holds in the viscosity sense,
hence $U^\ast$ becomes a viscosity subsolution to~\eqref{eq:5.7}.
Since  the weak comparison principle holds for~\eqref{eq:5.7} (see for instance \cite{CIL}*{Theorem~3.3}),
we obtain $U=U^\ast$ in $X$.
Then  $u$ is quasi-starshaped as desired.
\end{proof}

Next, as a further generalization of problem~\eqref{eq:5.1}, 
we consider the following elliptic boundary value problem
\begin{equation}
\label{eq:5.9}
\begin{cases}
\Delta_{q,M} u=f(p,u, \nabla_M u) & \text{ in }\Omega,\\
u=0 & \text{on }\partial \Omega_0,\\
u=1& \text{in\ }\overline{\Omega_1},\\
\end{cases}
\end{equation}
where $f$ is a nonnegative H\"older continuous function on 
\[
\mathcal{X}_\Omega\coloneqq \{  (p,s,v)\mid p\in \overline\Omega, s\in [0,1],  v\in T_pM\}.
\]
\begin{theorem}\label{theorem:5.4}
Let $\Omega_0$, $\Omega_1$ and $\Omega$ be as stated in the preamble of this section.
Assume that $f$ is non-decreasing with respect to the second variable and
\begin{equation}\label{eq:5.10}
\left(\frac{|\xi(\gamma_p(t))|\cdot \psi(|\xi(\gamma_p(t))|)}{|\xi(p)|\cdot \psi(|\xi(p)|)}\right)^q
f\left(\gamma_p(t), s,  \frac{|\xi(p)|}{|\xi(\gamma_p(t))|} V(t) \right)\geq f(p,s, v)
\end{equation}
for $(p,s,v)\in  \mathcal{X}_\Omega$ and $t\geq 1$ with $\gamma_p(t)\in \Omega$,
where $V$ is the parallel vector field along the curve~$\gamma_p:[0, T_{\Omega_0,p})\to M$ with $V(1)=v$.
Then any differentiable viscosity solution  $u$ to problem~\eqref{eq:5.9} is quasi-starshaped about~$o$.
\end{theorem}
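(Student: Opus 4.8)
The plan is to follow the exact same scheme as in the proofs of Theorems~\ref{theorem:5.2} and~\ref{theorem:5.3}: transplant the problem to Euclidean space via $\xi$, form the quasi-starshaped envelope $U^\ast$ of $U\coloneqq u\circ\xi^{-1}$, show that $U^\ast$ is a viscosity subsolution to the transplanted equation, and conclude $U=U^\ast$ by the weak comparison principle. First I would record that, using the warped-product formula for $\Delta_{q,M}$ from Section~\ref{section:3}, the function $U$ is a viscosity solution in $X$ of
\[
\Delta_{q,\mathbb{R}^n}U+(n-q)|\nabla_{\mathbb{R}^n}U|^{2-q}\langle\nabla_{\mathbb{R}^n}\log\Psi,\nabla_{\mathbb{R}^n}U\rangle=\Psi(|\cdot|)^q\, f\!\left(\xi^{-1}(\cdot),U,(d\xi^{-1})(\nabla_{\mathbb{R}^n}U)\right),
\]
with boundary data $0$ on $\partial X_0$ and $1$ on $\overline{X_1}$; here the right-hand side accounts for the conformal rescaling of the metric, and one must be slightly careful tracking how $\nabla_M u$ corresponds to $\nabla_{\mathbb{R}^n}U$ under $d\xi$, which is where the factor $|\xi(p)|/|\xi(\gamma_p(t))|$ and the parallel transport $V$ in hypothesis~\eqref{eq:5.10} enter. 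As before, $U^\ast$ is continuous, equals $0$ on $\partial X_0$, equals $1$ on $\overline{X_1}$, has starshaped superlevel sets, and is the smallest quasi-starshaped continuous function dominating $U$.

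Next I would verify the subsolution property of $U^\ast$ at an arbitrary $x\in X$. If $U^\ast(x)=U(x)$, a $C^2$ function touching $U^\ast$ from above at $x$ also touches $U$ from above there, and the subsolution inequality for $U$ together with monotonicity of $f$ in its second variable (since $U(x)=U^\ast(x)$) gives exactly what is needed. If $U^\ast(x)=U(t_x x)$ for some $t_x\in(1,T_x)$, I pass to $\widetilde U(y)\coloneqq U(t_x y)$ as in the previous proofs; a test function touching $U^\ast$ from above at $x$ touches $\widetilde U$ from above at $x$, and the scaling relations
\[
\nabla_{\mathbb{R}^n}\widetilde U(x)=t_x\nabla_{\mathbb{R}^n}U(t_x x),\qquad
\Delta_{q,\mathbb{R}^n}\widetilde U(x)=t_x^{\,q}\,\Delta_{q,\mathbb{R}^n}U(t_x x)
\]
hold in the viscosity sense, as recorded in~\eqref{eq:5.8}. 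Evaluating the equation for $U$ at $t_x x$, using $\langle t_x x,\nabla_{\mathbb{R}^n}U(t_x x)\rangle=0$ from the critical-point condition~\eqref{eq:5.5} to kill the $\log\Psi$ term, and then invoking hypothesis~\eqref{eq:5.10} — with $p=\xi^{-1}(x)$, $\gamma_p(t_x)=\xi^{-1}(t_x x)$, $s=U^\ast(x)$ and $v=(d\xi^{-1})(\nabla_{\mathbb{R}^n}\varphi(x))$ — I get
\[
\Delta_{q,\mathbb{R}^n}\varphi(x)\ \ge\ t_x^{\,q}\,\Delta_{q,\mathbb{R}^n}U(t_x x)\ =\ t_x^{\,q}\,\Psi(|t_x x|)^q f(\xi^{-1}(t_x x),U^\ast(x),\cdot)\ \ge\ \Psi(|x|)^q f(\xi^{-1}(x),U^\ast(x),\cdot),
\]
which is precisely the viscosity subsolution inequality for the transplanted equation at $x$. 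Finally, since $f\ge0$ is H\"older continuous and non-decreasing in $s$, the weak comparison principle applies to this equation, $U^\ast$ and $U$ agree on $\partial X$, so $U^\ast\le U$, hence $U=U^\ast$, and $u$ is quasi-starshaped.

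The main obstacle is the middle step: matching the geometric hypothesis~\eqref{eq:5.10}, which is phrased on $M$ in terms of $\gamma_p$, $\psi$ and parallel transport, with the Euclidean scaling identities for $\widetilde U$. One must check carefully that under $d\xi$ the dilation $y\mapsto t_x y$ in $\mathbb{R}^n$ corresponds to moving along $\gamma_p$ from $t=1$ to $t=t_x$, that the gradient $\nabla_{\mathbb{R}^n}U(t_x x)$ corresponds (up to the scalar $|\xi(p)|/|\xi(\gamma_p(t_x))|$ built into $V$) to the parallel transport of $v$, and that the powers of $|\xi|$ and $\psi$ in~\eqref{eq:5.10} are exactly those produced by the factor $t_x^{\,q}\Psi^q$ after the substitution $t_x|x|=|\xi(\gamma_p(t_x))|$ and $|x|=|\xi(p)|$. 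Once this bookkeeping is done, the rest is a verbatim repetition of the argument for Theorem~\ref{theorem:5.3}.
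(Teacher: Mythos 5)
Your proposal is correct and follows essentially the same route as the paper: transplant the problem to the rotationally symmetric coordinates, show the quasi-starshaped envelope $U^\ast$ is a viscosity subsolution using \eqref{eq:5.4}, \eqref{eq:5.5} and \eqref{eq:5.8}, translate hypothesis \eqref{eq:5.10} into a scaling inequality for the Euclidean nonlinearity, and conclude by the weak comparison principle (for which the monotonicity of $f$ in $s$ is used). The ``bookkeeping'' you defer is exactly what the paper carries out explicitly: identifying $(v^1,\dots,v^n)\in\mathbb{R}^n$ with $v=\sum_i v^i\,\Psi(\xi(p))^{-1}\,\frac{\partial}{\partial \xi^i}\big|_p$, observing that parallel transport along the radial geodesic is $V(t)=\sum_i v^i\,\psi(|\xi(\gamma_p(t))|)^{-1}\,\frac{\partial}{\partial \xi^i}\big|_{\gamma_p(t)}$, so that \eqref{eq:5.10} becomes $\tau^q\,\Psi(\tau x)^q\,F\left(\tau x,s,\tau^{-1}v\right)\geq \Psi(x)^q\,F(x,s,v)$ for $\tau\geq 1$, which is precisely the inequality your chain of estimates requires.
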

\begin{proof}
The strategy of the proof is the same as that of Theorem~\ref{theorem:5.2}.
We use the same notations $X$, $\Psi$, $U$, $U^{\ast}$, $t_{x}$, $\widetilde{U}$ and so on.
For a vector $(v^1, \cdots, v^n)\in \mathbb{R}^n$, define the tangent vector $v \in T_p M$ for $p\in \Omega$ by
\[
v\coloneqq \sum_{i=1}^n\frac{v^i}{ \Psi(\xi(p))}  \frac{\partial}{\partial \xi^i}\bigg|_p.
\]
Then the parallel vector filed along
the curve $\gamma_p:[0, T_{\Omega_0,p})\to M$ with $V(1)=v$ is given by
\[
V(t)= \sum_{i=1}^n  \frac{v^i}{\Psi(\xi(\gamma_p(t)))} \frac{\partial}{\partial \xi^i}\bigg|_{\gamma_p(t)}
\quad
\text{for }t\in [0, T_{\Omega_0,p}).
\]
In addition, we define $F\colon X \times [0,1] \times \mathbb{R}^n \to \mathbb{R}$ by 
\[
F\left(x, s, (v^1, \cdots, v^n) \right)\coloneqq f\left(\xi^{-1}(x),s,  v\right).
\]
Then the relation~\eqref{eq:5.10} is rephrased as
\begin{equation}\label{eq:5.11}
\tau^q \Psi(\tau x)^q F\left(\tau x, s,  \frac{1}{\tau} v \right) \geq \Psi(x)^q F(x,s, v)
\end{equation}
for $(x,s)\in X\times[0,1]$ and $\tau\geq 1$ with $\tau x \in X$.
We also find that  $U$ solves
\begin{equation}\label{eq:5.12}
\Delta_{q,\mathbb{R}^{n}} U+(n-q) |\nabla_{\mathbb{R}^n} U|^{q-2}\left\langle\nabla_{\mathbb{R}^n} \Psi, \nabla_{\mathbb{R}^{n}}  U\right\rangle 
=\Psi^qF(\cdot , U,\nabla_{\mathbb{R}^n} U)  
\quad
\text{in }X
\end{equation}
in the viscosity sense.
Since the weak comparison principle holds (see \cite{CIL}*{Theorem 3.3}), 
by a similar argument to the proof of Theorem~\ref{theorem:5.2}, 
it is enough to show that  $U^\ast$ is  a viscosity subsolution to~\eqref{eq:5.12}.

Fix $x \in X$. 
If $U^\ast(x)=U(x)$,  
then any $C^2$-function $\varphi$ touching  $U^\ast$ at $x$ from above, also touches $U$ from above at $x$ and 
it holds
\begin{align*}
 \Delta_{q,\mathbb{R}^{n}} \varphi (x)
\geq -(n-q) |\nabla_{\mathbb{R}^n} \varphi|(x)^{q-2}\left\langle\nabla_{\mathbb{R}^n} \log \Psi(x), \nabla_{\mathbb{R}^{n}}  \varphi(x)\right\rangle
+
\Psi(x)^qF\left(x, \varphi(x), \nabla_{\mathbb{R}^n} \varphi(x) \right)
\end{align*}
as in the proof in Theorem~\ref{theorem:5.2}.
Next assume  $U^\ast(x)> U(x)$.
If a $C^2$-test function $\varphi$  touches  $U^\ast$ at ${x}$ from above, then it  touches  also $\widetilde{U}$ at ${x}$ from above, whence 
\begin{align*}
 &  \Delta_{q,\mathbb{R}^{n}} \varphi (x)\\
 & \geq \Delta_{q,\mathbb{R}^{n}} \widetilde{U} (x)
 =t_x^q  \Delta_{q,\mathbb{R}^{n}} U(t_x x)\\
 & =t_x^q
\big[
-(n-q) |\nabla_{\mathbb{R}^{n}}U | (t_xx)^{q-2}
 \left\langle \nabla_{\mathbb{R}^n} \log \Psi(t_xx),  \nabla_{\mathbb{R}^n} U (t_x x) \right\rangle \\
& \qquad\quad+
\Psi(t_x  x )^q F\left( t_x x , U(t_x x), \nabla_{\mathbb{R}^n} U (t_x x)\right)  
\big]\\
 & =t_x^q
\left[
0+ \Psi (t_x  x )^q F\left( t_x x , \varphi(x), \frac{1}{t_x}\nabla_{\mathbb{R}^n} \varphi (x)\right)  
\right]\\
& \geq \Psi(x)^qF\left(x, \varphi(x),  \nabla_{\mathbb{R}^n} \varphi (x) \right)\\
 & =
-(n-q) |\nabla_{\mathbb{R}^n} \varphi|(x)^{q-2}\left\langle\nabla_{\mathbb{R}^n} \log \Psi(x), \nabla_{\mathbb{R}^{n}}  \varphi(x)\right\rangle
+
\Psi(x)^qF\left(x, \varphi(x), \nabla_{\mathbb{R}^n} \varphi(x) \right)
\end{align*}
by \eqref{eq:5.8}, \eqref{eq:5.5}, \eqref{eq:5.4}, and \eqref{eq:5.11}.
This means that $U^\ast$ is a viscosity subsolution to~\eqref{eq:5.12} as desired. 
Then, similarly to the proof of Theorem~\ref{theorem:5.2}, 
we conclude that $U$ is quasi-starshaped about~$0$, 
that is, $u$ is  quasi-starshaped  about~$o$. 
Thus Theorem~\ref{theorem:5.4} follows.
\end{proof}
\begin{remark}
By the geodesic equation, we find 
\[
\psi(|\xi \left( \gamma_p(t) \right)| )\cdot \frac{d}{dt}|\xi \left( \gamma_p(t) \right)|  =1.
\]
Setting $\alpha_p(t)\coloneqq \log |\xi(\gamma_p(t)|$, 
we can  rewrite  \eqref{eq:5.10} as
\begin{equation*}
\left(\frac{\alpha_p'(1)}{\alpha_p'(t)}\right)^q
f\left(\gamma_p(t), s,  \frac{|\xi(p)|}{\xi(\gamma_p(t))|} V(t) \right)\geq f(p,s, v).
\end{equation*}
Assume the concavity of $\alpha_p$, 
or equivalently, 
\[
\frac{\alpha_p'(1)}{\alpha_p'(t)} \geq 1
\qquad\text{for }t\geq1,
\]
which is satisfied if $M=\mathbb{M}^n_\kappa$ with either $\kappa \leq 0$ or $\kappa>0$ with $R\leq \kappa^{-1/2}\pi/2$.
If
\[
 f(\gamma_p(t),s)\geq f(p,s)\quad \text{for }t\geq1
\]
holds, 
then \eqref{eq:5.10} is satisfied.
\end{remark}
\begin{ack}
The first and third authors were supported in part by JSPS KAKENHI Grant Number 19H05599. 
The second author was supported in part by  INdAM through a GNAMPA Project and by 
the project "Geometric-Analytic Methods for PDEs and Applications (GAMPA)", funded by European Union --Next Generation EU  within the PRIN 2022 program 
(D.D. 104 - 02/02/2022 Ministero dell'Universit\`a e della Ricerca).
The third author was also supported in part by JSPS KAKENHI Grant Number 19K03494.
\end{ack}
\begin{bibdiv}
\begin{biblist}
\bib{A}{article}{
   author={Acker, A.},
   title={On the uniqueness, monotonicity, starlikeness, and convexity of
   solutions for a nonlinear boundary value problem in elliptic PDEs},
   journal={Nonlinear Anal.},
   volume={22},
   date={1994},
   pages={697--705},
  }
\bib{AFS}{article}{
   author={Azagra, Daniel},
   author={Ferrera, Juan},
   author={Sanz, Beatriz},
   title={Viscosity solutions to second order partial differential equations
   on Riemannian manifolds},
   journal={J. Differential Equations},
   volume={245},
   date={2008},
   number={2},
   pages={307--336},
}
\bib{BM}{article}{
   author={Bandle, C.},
   author={Marcus, M.},
   title={Radial averaging transformations with various metrics},
   journal={Pacific J. Math.},
   volume={46},
   date={1973},
   pages={337--348},
}
\bib{CIL}{article}{
   author={Crandall, M. G.},
   author={Ishii, H.},
   author={Lions, P.-L.},
   title={User's guide to viscosity solutions of second order partial
   differential equations},
   journal={Bull. Amer. Math. Soc. (N.S.)},
   volume={27},
   date={1992},
   pages={1--67},
  }%
 \bib{DG}{article}{
  author={Danielli, D.},
  author={Garofalo, N.},
  title={Green functions on Carnot groups and the geometry of their level sets},
  journal={Personal communication},
  }
\bib{DF}{article}{
   author={Dragoni, F.},
   author={Filali, D.},
   title={Starshaped and convex sets in Carnot groups and in the geometries
   of vector fields},
   journal={J. Convex Anal.},
   volume={26},
   date={2019},
   pages={1349--1372},
}
\bib{DGS}{article}{
   author={Dragoni, F.},
   author={Garofalo, N.},
   author={Salani, P.},
   title={Starshapedeness for fully non-linear equations in Carnot groups},
   journal={J. Lond. Math. Soc. (2)},
   volume={99},
   date={2019},
   pages={901--918},
  }
  \bib{FP}{article}{
   author={Fogagnolo, M.},
   author={Pinamonti, A.},
   title={Strict starshapedness of solutions to the horizontal $p$-Laplacian in the Heisenberg group},
   journal={Math. Eng.},
   volume={3},
   date={2021},
   number={6},
   pages={Paper No. 046, 15 pp. }
  }

\bib{F}{article}{
   author={Francini, E.},
   title={Starshapedness of level sets for solutions of nonlinear elliptic
   equations},
   journal={Math. Nachr.},
   volume={193},
   date={1998},
   pages={49--56},
}
\bib{FG}{article}{
   author={Francini, E.},
   author={Greco, A.},
   title={Blow-up in exterior domains: existence and star-shapedness},
   journal={Z. Anal. Anwendungen},
   volume={17},
   date={1998},
   pages={431--441},
}
 \bib{Gergen}{article}{
   author={Gergen, J.J.},
   title={Note on the Green function of a star-shaped three dimensional region},
   journal={Amer. J. Math.},
   volume={53},
   date={1931},
   pages={746--752},
}  
\bib{GP}{article}{
   author={Goffi, Alessandro},
   author={Pediconi, Francesco},
   title={A note on the strong maximum principle for fully nonlinear
   equations on Riemannian manifolds},
   journal={J. Geom. Anal.},
   volume={31},
   date={2021},
   number={8},
   pages={8641--8665},
}
\bib{GT}{book}{
   author={Gilbarg, David},
   author={Trudinger, Neil S.},
   title={Elliptic Partial Differential Equations of Second Order},
   series={ Classics in Mathematics},
   volume={224},
   edition={2},
   publisher={Springer Berlin, Heidelberg},
   date={2001},
   pages={XIII+518},
}

 \bib{GR}{article}{
   author={Greco, A.},
   author={Reichel, Wolfgang},
   title={Existence and starshapedness for the Lane-Emden equation},
   journal={Appl. Anal.},
   volume={78},
   date={2001},
   pages={21--32},
}  
\bib{IST}{article}{
   author={Ishige, Kazuhiro},
   author={Salani, Paolo},
   author={Takatsu, Asuka},
   title={Power concavity for elliptic and parabolic boundary value problems
   on rotationally symmetric domains},
   journal={Commun. Contemp. Math.},
   volume={24},
   date={2022},
   pages={Paper No. 2150097, 29},
}
\bib{JKS}{article}{
   author={Jarohs, Sven},
   author={Kulczycki, Tadeusz},
   author={Salani, Paolo},
   title={Starshapedness of the superlevel sets of solutions to equations
   involving the fractional Laplacian in starshaped rings},
   journal={Math. Nachr.},
   volume={292},
   date={2019},
   pages={1008--1021},
}
\bib{Kaw1}{article}{
 author={Kawohl, Bernhard},
 title={Starshapedness of level sets for the obstacle problem and for the capacitory potential problem},
 journal={Proc. Amer. Math. Soc.},
 volume={89},
 date={1983},
 pages={637--640},
}
\bib{Kaw2}{book}{
 author={Kawohl, Bernhard},
 title={Rearrangements and convexity of level sets in PDE},
 series={Lecture Notes in Mathematics},
 volume={1150},
 publisher={Springer-Verlag, Berlin},
 date={1985},
 pages={iv+136},
}
\bib{Kaw3}{article}{
 author={Kawohl, Bernhard},
 title={On starshaped rearrangement and applications},
 journal={Trans. Amer. Math. Soc.},
 volume={296},
 date={1986},
 pages={377--386},
}
\bib{Marcus}{article}{
   author={Marcus, Moshe},
   title={Transformations of domains in the plane and applications in the
   theory of functions},
   journal={Pacific J. Math.},
   volume={14},
   date={1964},
   pages={613--626},
}
\bib{Oni}{book}{
   author={O'Neill, Barrett},
   title={Semi-Riemannian geometry},
   series={Pure and Applied Mathematics},
   volume={103},
   publisher={Academic Press, Inc. [Harcourt Brace Jovanovich, Publishers],
   New York},
   date={1983},
   pages={xiii+468},
  }
  \bib{P}{book}{
   author={Petersen, Peter},
   title={Riemannian geometry},
   series={Graduate Texts in Mathematics},
   volume={171},
   edition={3},
   publisher={Springer, Cham},
   date={2016},
   pages={xviii+499},
}
\bib{Pf}{article}{
   author={Pfaltzgraff, J. A.},
   title={Radial symmetrization and capacities in space},
   journal={Duke Math. J.},
   volume={34},
   date={1967},
   pages={747--756},
}
\bib{S}{article}{
   author={Salani, Paolo},
   title={Starshapedness of level sets of solutions to elliptic PDEs},
   journal={Appl. Anal.},
   volume={84},
   date={2005},
   pages={1185--1197},
}
\bib{St}{article}{
   author={Stoddart, A. W. J.},
   title={The shape of level surfaces of harmonic functions in three
   dimensions},
   journal={Michigan Math. J.},
   volume={11},
   date={1964},
   pages={225--229},
  }
\end{biblist}
\end{bibdiv} 
\end{document}